\numberwithin{equation}{section}
\theoremstyle{plain}
\newtheorem{theorem}{Theorem}[section]
\newtheorem{definition}{Definition}[section]
\newtheorem{lemma}[theorem]{Lemma}
\newtheorem{proposition}[theorem]{Proposition}
\newcommand{\beq}{\begin{equation}}
\newcommand{\eeq}{\end{equation}}
\newcommand{\beqs}{\begin{eqnarray*}}
\newcommand{\eeqs}{\end{eqnarray*}}
\newcommand{\beqn}{\begin{eqnarray}}
\newcommand{\eeqn}{\end{eqnarray}}
\newcommand{\beqa}{\begin{array}}
\newcommand{\eeqa}{\end{array}}
\def\phi{\varphi}
\begin{document}
\title{A Class of prescribed Weingarten curvature equations in Euclidean spcace}

\author{Li Chen}
\address{Faculty of Mathematics and Statistics, Hubei Key Laboratory of Applied Mathematics, Hubei University,  Wuhan 430062, P.R. China}
\email{chenli@hubu.edu.cn}

\author{Agen Shang}
\address{Faculty of Mathematics and Statistics, Hubei Key Laboratory of Applied Mathematics, Hubei University,  Wuhan 430062, P.R. China}
\email{shangagen\_2019@163.com}

\author{Qiang Tu}
\address{Faculty of Mathematics and Statistics, Hubei Key Laboratory of Applied Mathematics, Hubei University,  Wuhan 430062, P.R. China}
\email{qiangtu@hubu.edu.cn}

\keywords{Prescribed Weingarten curvature ; $k$-convex; Star-shaped}

\subjclass[2010]{Primary 35J96, 52A39; Secondary 53A05.}

\thanks{This research was supported by funds from Hubei Provincial Department of Education
Key Projects D20181003.}

\begin{abstract}
In this paper, we consider a class of prescribed Weingarten
curvature equations. Under some sufficient condition, we obtain an
existence result by the standard degree theory based on the a prior
estimates for the solutions to the prescribed Weingarten curvature
equations.
\end{abstract}

\maketitle

\baselineskip18pt

\parskip3pt

 \section{Introduction}

Let $(M, g)$  be  a smooth, compact Riemannian manifold of
dimension $n\ge3$ and $V$ be a $(0,2)$ tensor on $(M, g)$. The $\sigma_k$- curvature of $V$ is defined by
\begin{equation*}
\sigma_k(V),
\end{equation*}
where $\sigma_k(V)$ means $\sigma_k$ is applied to the eigenvalues
of $g^{-1} V$ and the $k$-th elementary symmetric polynomial $\sigma_k$ is defined by:
\[\sigma_k(\lambda)=\sum_{1\le i_1<\cdots<i_k\le n}\lambda_{i_1}\cdots\lambda_{i_k}.\]

Recently, the following fully nonlinear equations of linear combination of $\sigma_k$-curvature of $V$
\begin{eqnarray}\label{V}
\sigma_k(V)=\sum_{l=0}^{k-1}\alpha_l(x) \sigma_{l}(V), \quad 2\leq k\leq n.
\end{eqnarray}
are widely studied, where $\alpha_l(x)$ are given smooth functions
on $M$. For $V=D^2 u$, Krylov \cite{Kry95} considered  Dirichlet
problem of the following degenerate equation in a $(k-1)$-convex
domain $D\subset\mathbb{R}^n$,
\[\sigma_k(D^2u) =
\sum_{l=0}^{k-1} \alpha_l(x)\sigma_l(D^2u)\] with all coefficient
$\alpha_{l}(x)\geq 0$ for $0\leq l\leq k-1$. Recently, in
\cite{GZ19} the authors studied
 \begin{eqnarray*}\label{p}
\sigma_k(D^2u+uI)=\sum_{l=0}^{k-1}\alpha_l(x)\sigma_l(D^2u+uI) \quad \mbox{on} \quad \mathbb{S}^n,
\end{eqnarray*}
which arises in the problem of prescribed convex combination of area
measures \cite{Sch13}. Motivated by \cite{Kry95} and \cite{GZ19},
the authors in \cite{Chen19} studied the equations of linear
combination of the prescribed $\sigma_k$-scalar curvature
\begin{equation*}
\sigma_k(\frac{-Ric_g}{n-2})+\alpha(x)\sigma_{k-1}(\frac{-Ric_g}{n-2})=f(x),
\quad 3\leq k\leq n,
\end{equation*}
and
\begin{eqnarray*}
\sigma_k( A_{{g}}) +\alpha(x) \sigma_{k-1}(A_{{g}})=f(x), \quad 3\leq k\leq n,
\end{eqnarray*}
where ${Ric}_{g}$ and
$${A}_{g}
={\frac{1}{{n-2}}}
\left({Ric}_{g} - {\frac{{{R}_{g}}}{{2(n-1)}}}g
\right)$$ are the Ricci curvature and the Schouten tensor
of $g$ respectively.

Because of its structure as a combination of elementary symmetric functions, equation
\eqref{V} is also interesting from the PDE point of view. Such type of equations arise naturally
from many important geometric problems, such as the so-called Fu-Yau equation
arising from the study of the Hull-Strominger system in theoretical physics, which is an
equation that can be written as the linear combination of the first and second elementary
symmetric functions,
\begin{eqnarray*}
\sigma_1(i\partial \overline{\partial}(e^u
+\alpha^{\prime}e^{-u})) +\alpha^{\prime}\sigma_{2}(i\partial \overline{\partial}u)=\phi
\end{eqnarray*}
on $n$-dimensional compact K$\ddot{a}$hler manifolds, see Fu-Yau \cite{Fu07, Fu08}
and Phong-Picard-Zhang \cite{Ph17, Ph19, Ph20}. Furthermore, the special
Lagrangian equations introduced by Harvey and
Lawson \cite{HL} can also be written as the alternative combinations of elementary symmetric
functions,
\begin{eqnarray*}
\sin\theta\sum_{k=0}^{[\frac{n}{2}]}(-1)^{k}\sigma_{2k}(D^2u)+\cos\theta
\sum_{k=0}^{[\frac{(n-1)}{2}]}(-1)^{k}\sigma_{2k+1}(D^2u)=0.
\end{eqnarray*}
Moreover, equations of the form
\begin{eqnarray*}
\sigma_{1}(D^2u)+b\sigma_{n}(D^2u)=C
\end{eqnarray*}
for some constants $b\geq0$ and $C>0$ also arise from the study of $J$-equation on toric
varieties by Collins-Sz$\acute{e}$kelyhidi \cite{Co17}.

In this paper, we study the problem of prescribed Weingarten
curvature which consists of linear combination of $k$-mean curvature
\begin{eqnarray}\label{Eq}
\sigma_k(\kappa(X))=\sum_{l=0}^{k-1}\alpha_l(X) \sigma_{l}(\kappa(X)), \quad 2\leq k\leq n,
\end{eqnarray}
on a closed Riemannian manifold $M$, where $\kappa(X)=(\kappa_1(X),
..., \kappa_n(X))$ are the principle curvatures of hypersurface $M$
which is an embedded manifold in $\mathbb{R}^{n+1}$ at $X$.
$\sigma_k$, $k=1, 2, ..., n,$ are the Weingarten curvatures of
hypersurface $M$ (or $k$-mean curvature). In the cases $k=1, 2$ and
$n$, they are the mean curvature, scalar curvature, and Gauss
curvature, respectively.

To ensure the ellipticity of \eqref{Eq}, we have to restrict the class of functions
and domains.
\begin{definition}
A smooth hypersurface $M \subset \mathbb{R}^{n+1}$ is called
$k$-convex if the principle curvature vector of $M$
$$\kappa(X)=(\kappa_1(X), ..., \kappa_n(X))$$ belongs
to $\Gamma_k$ for all $X \in M$, where $\Gamma_k$ is the Garding's
cone
\begin{eqnarray*}\label{cone}
\Gamma_{k}=\{\lambda \in \mathbb{R} ^n: \sigma_{j}(\lambda)>0, \forall 1\leq j \leq k\}.
\end{eqnarray*}
\end{definition}

We mainly get the following theorem.

\begin{theorem}\label{Main}
Let $n\ge3, k\ge 2$ and $\alpha_l(X)\in C^\infty(\mathbb{R}^{n+1})$ be positive functions for
all $0\leq l\leq k-1$, assume that
\begin{eqnarray}\label{ASS1}
\frac{\sigma_k(e)}{|X|^{k}}
\geq \sum_{l=0}^{k-1}\alpha_{l}(X)\frac{\sigma_{l} (e)}{|X|^{l}}
\quad \mbox{for} \quad |X|\geq r_2,
\end{eqnarray}
\begin{eqnarray}\label{ASS2}
\frac{\sigma_k(e)}{|X|^{k}}
\leq \sum_{l=0}^{k-1}\alpha_{l}(X)\frac{\sigma_{l} (e)}{|X|^{l}} \quad \mbox{for} \quad |X|\leq r_1,
\end{eqnarray}
and
\begin{eqnarray}\label{ASS3}
\frac{\partial }{\partial \rho}\bigg[\rho^{k-l}\alpha_l(X)\bigg]\leq
0 \quad \mbox{for} \quad r_1\leq |X|\leq r_2,
\end{eqnarray}
where $\rho=|X|$ and $e=(1,1,...,1)$. Then there exists a
$k$-convex, star-shaped hypersurface $M$ in $\{r_1\leq |X|\leq
r_2\}$ satisfies equation (\ref{Eq}).
\end{theorem}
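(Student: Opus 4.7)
Since we are looking for a star-shaped hypersurface, the natural first step is to parametrize $M$ as a radial graph $X = \rho(\theta)\theta$ for $\theta \in \mathbb{S}^n$. A standard calculation expresses the principal curvatures $\kappa_i$ in terms of $\rho$, $\nabla \rho$, and $\nabla^2\rho$ (with respect to the round metric on $\mathbb{S}^n$), so that equation \eqref{Eq} becomes a fully nonlinear PDE of the form
\begin{equation*}
F(\nabla^2\rho,\nabla\rho,\rho,\theta) := \sigma_k(\kappa[\rho]) - \sum_{l=0}^{k-1}\alpha_l(X)\sigma_l(\kappa[\rho]) = 0
\end{equation*}
on $\mathbb{S}^n$, with the admissibility requirement that $\kappa[\rho] \in \Gamma_k$. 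The plan is then the classical one: establish uniform a priori estimates for admissible solutions and deduce existence through a degree-theoretic argument, in the spirit of the references cited in the introduction.

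The $C^0$ estimate is the direct payoff of the barrier assumptions \eqref{ASS1} and \eqref{ASS2}: on a sphere of radius $\rho_0$ one has $\kappa_i \equiv 1/\rho_0$, so the assumptions say precisely that the sphere $|X| = r_2$ is a supersolution and $|X| = r_1$ is a subsolution. Evaluating the equation at the maximum and minimum of $\rho$ on $\mathbb{S}^n$ and using that at these points $\nabla\rho = 0$ and $\nabla^2\rho$ has a sign, one forces $r_1 \le \rho \le r_2$. The $C^1$ (gradient) estimate is where the monotonicity hypothesis \eqref{ASS3} enters decisively: one applies the maximum principle to an auxiliary function of the form $\phi = |\nabla\rho|^2/(2\rho^2) + h(\rho)$ (or equivalently $|\nabla\log\rho|^2$), and the monotonicity of $\rho^{k-l}\alpha_l$ in $\rho$ is exactly the sign condition one needs on the $\rho$-derivative of the rescaled right-hand side to close the inequality and bound $|\nabla\rho|$.

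The main obstacle, as in every prescribed curvature problem of this type, is the $C^2$ bound, and in particular a uniform upper bound on the principal curvatures. The strategy is to apply the maximum principle to the largest principal curvature $\kappa_{\max}$ (or to a symmetric test function such as $\log\kappa_{\max} + g(|\nabla\rho|^2) + \lambda\rho$), differentiate the equation twice along a normal coordinate frame, and exploit the concavity of $\sigma_k^{1/k}$ together with the lower-order nature of $\sum_{l<k}\alpha_l\sigma_l$. The previously established $C^0$ and $C^1$ bounds, together with the Garding-cone admissibility and the positivity of the $\alpha_l$, guarantee that the leading operator is uniformly elliptic once $\kappa_{\max}$ is controlled, so the delicate point is to absorb the bad terms coming from differentiating the $\sigma_l$ pieces; here one splits the problematic gradient terms as in the Chen--Guan--Zhang type arguments referenced in the introduction. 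Once the $C^2$ estimate is in hand, Evans--Krylov and Schauder upgrade it to $C^{2,\alpha}$, and a Leray--Schauder degree argument (deforming $(\alpha_l)$ continuously to a model case such as the standard $k$-mean curvature equation on a sphere with nonzero degree) yields the desired solution.
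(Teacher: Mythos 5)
Your overall scaffolding (radial graph, $C^0$--$C^2$ a priori estimates, degree theory) matches the paper's, and the $C^0$ and $C^1$ descriptions are close in spirit to what is done. But there is a genuine gap hiding in the phrase ``exploit the concavity of $\sigma_k^{1/k}$ together with the lower-order nature of $\sum_{l<k}\alpha_l\sigma_l$'': the operator
\[
\sigma_k(\kappa)-\sum_{l=0}^{k-1}\alpha_l(X)\,\sigma_l(\kappa)
\]
is \emph{not} elliptic, nor concave, on $\Gamma_k$ in the naive sense. Because $\alpha_l>0$ and $\partial\sigma_l/\partial\kappa_i>0$ on the cone, the terms $-\alpha_l\,\partial\sigma_l/\partial\kappa_i$ enter with the wrong sign; they cannot be dismissed as ``lower-order'' since differentiating $\sigma_l(\kappa)$ twice still produces terms quadratic in $\nabla A$. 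The paper resolves this at the outset by rewriting the equation in the Guan--Zhang quotient form
\[
G:=\frac{\sigma_k}{\sigma_{k-1}}-\sum_{l=0}^{k-2}\alpha_l\frac{\sigma_l}{\sigma_{k-1}}=\alpha_{k-1}(X),
\]
and invoking the fact (Proposition~\ref{ellipticconcave}) that $G$ is elliptic and concave in $h_{ij}$ on $\Gamma_{k-1}$. This reformulation is what makes both the linearized operator well-posed and the $C^2$ calculation closeable: in Lemma~\ref{C^2-1} the dangerous $G_l^{ij,rs}h_{ij;p}h_{rs;p}$ contributions are controlled via the concavity of $(\sigma_{k-1}/\sigma_l)^{1/(k-1-l)}$ and a completion of squares, not via concavity of $\sigma_k^{1/k}$. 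Without this step your $C^2$ argument has no reason to close.

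Two further, smaller divergences. First, for the second-order estimate the paper does not touch $\kappa_{\max}$: since $k\geq 2$ the admissible hypersurface is $2$-convex, so $|\kappa_i|\leq C H$, and it suffices to bound the mean curvature; the test function is $W=\log H-\log\langle X,\nu\rangle$, which pairs with the equality of Lemma~\ref{C^2-2} to produce exact cancellations that a generic choice like $\log\kappa_{\max}+g(|\nabla\rho|^2)+\lambda\rho$ would not. Working with $\kappa_{\max}$ also raises the usual nonsmoothness issue at multiple eigenvalues, which you would need to address and which the $H$-based choice avoids. Second, the degree-theoretic continuation is more delicate than ``deform $(\alpha_l)$ to a model case'': the paper constructs a specific one-parameter family \eqref{Eq2} in which $\alpha_{k-1}(X,t)$ interpolates between the data and a radially monotone model term $\phi(|X|)\sigma_k(e)/(\sigma_{k-1}(e)|X|)$ chosen so that (i) the a priori estimates hold uniformly in $t$ and (ii) at $t=0$ the solution is unique (a round sphere) with an invertible linearization, which is what actually pins down the degree.
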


If $\alpha_l\equiv 0$ for $1 \leq l\leq k-1$, the equation
\eqref{Eq} is just the prescribed Weingarten curvature equation
\begin{eqnarray}\label{wce}
\sigma_{k}(\kappa(X))=f(X),
\end{eqnarray}
which has been widely studied in the past two decades. Such results
were obtained for case of prescribed mean curvature by
Bakelman-Kantor \cite{Ba1, Ba2} and by Treibergs-Wei \cite{Tr}. For
the case of prescribed Gaussian curvature by Oliker \cite{Ol}. For
general Weingarten curvatures by Aleksandrov \cite{Al}, Firey
\cite{Fi}, Caffarelli-Nirenberg-Spruck \cite{Ca} for a general class
of fully nonlinear operators $F$, including $F=\sigma_k$ and
$F=\frac{\sigma_k}{\sigma_l}$. Some results have been obtained by
Li-Oliker \cite{Li-Ol} on unit sphere, Barbosa-de Lira-Oliker
\cite{Ba-Li} on space forms, Jin-Li \cite{Jin} on hyperbolic
space, Andrade-Barbosa-de Lira \cite{An} on warped product
manifolds, that is $N=\mathbb{R}\times \Sigma,
g=dr^2+\phi^{2}(r)g_{S^n}$, Li-Sheng \cite{Li-Sh} for Riemannain
manifold equipped with a global normal Gaussian coordinate system,
that is $N=\mathbb{R}\times \Sigma, g=dr^2+\sigma_{ij}(x, r)dx_i
dx_j$.

For the equation \eqref{wce} in the case that $f=f(X, \nu(X))$, that is
$f$ is dependent of $X$ and the normal vector field $\nu$ along the
hypersurface $M$, in many cases, the curvature estimates are the key
part for this prescribed curvature problem. When $k=1$, curvature
estimate comes from the theory of quasilinear PDE. If $k=n$,
curvature estimate is due
to Caffarelli-Nirenberg-Spruck \cite{Ca}. Ivochkina \cite{Iv1, Iv2}
considered the Dirichlet problem of the above equation on domains in
$\mathbb{R}^n$, and obtained $C^2$ estimates there under some extra
conditions on the dependence of $f$ on $\nu$. $C^2$ estimate was
also proved for equation of prescribing curvature measures problem
in \cite{Guan09, Guan12}, where $f(X, \nu)=\langle X, \nu\rangle
\widetilde{f}(X)$. For $f=f(\nu)$ dependent only on $\nu$, the
$C^2$-estimate was proved in B. Guan and P. Guan \cite{Guan02}.
Recently, Guan, Ren and Wang \cite{Guan-Ren15} obtained global $C^2$
estimates for a closed convex hypersurface $M\subset \mathbb{R}^{n+1}$
and then solved the long standing problem \eqref{wce}. In the same
paper \cite{Guan-Ren15}, they also proved the estimate for
starshaped 2-convex hypersurfaces by introducing some new test
curvature functions. In \cite{Li-Ren}, Li, Ren and Wang relax the
convex to $k+1$-convex for any $k$ Hessian equations. In \cite{Ren},
Ren and Wang totally solved the case $k=n-1$, that is the global
curvature estimates of $n-1$ convex solutions of \eqref{wce} and
Hessian equations for $k=n-1$. In \cite{Sp}, Spruck-Xiao extended
2-convex case in \cite{Guan-Ren15} to space forms and give a simple
proof for the Euclidean case. Recently, Chen ,Li and Wang
\cite{Chen} extended \cite{Guan-Ren15} and \cite{Ren} to warped
product manifolds.

The organization of the paper is as follows.
In Sect. 2
we start with some preliminaries.
$C^0$, $C^1$ and $C^2$ estimates are given in Sect. 3.
In Sect. 4 we prove theorem\ref{Main}.

\section{Preliminaries}

\subsection{Setting and General facts}
For later convenience, we first state our conventions on Riemann
Curvature tensor and derivative notation. Let $M$ be a smooth
manifold and $g$ be a Riemannian metric on $M$ with Levi-Civita
connection $\nabla$. For a $(s, r)$ tensor field $\alpha$ on $M$,
its covariant derivative $\nabla \alpha$ is a $(s, r+1)$ tensor
field given by
\begin{eqnarray*}
&&\nabla \alpha(Y^1, .., Y^s, X_1, ..., X_r, X)
\\&=&\nabla_{X} \alpha(Y^1, .., Y^s, X_1, ..., X_r)\\&=&X(\alpha(Y^1, .., Y^s, X_1, ..., X_r))-
\alpha(\nabla_X Y^1, .., Y^s, X_1, ..., X_r)\\&&-...-\alpha(Y^1, ..,
Y^s, X_1, ..., \nabla_X  X_r).
\end{eqnarray*}
the coordinate expression of which is denoted by
$$\nabla \alpha=(\alpha_{k_{1}\cdot\cdot\cdot
k_{r}; k_{r+1}}^{l_{1}\cdot\cdot\cdot l_{s}}).$$ We can continue to
define the second covariant derivative of $\alpha$ as follows:
\begin{eqnarray*}
&&\nabla^2 \alpha(Y^1, .., Y^s, X_1, ..., X_r, X, Y)
=(\nabla_{Y}(\nabla\alpha))(Y^1, .., Y^s, X_1, ..., X_r, X).
\end{eqnarray*}
the coordinate expression of which is denoted by
$$\nabla^2 \alpha=(\alpha_{k_{1}\cdot\cdot\cdot
k_{r}; k_{r+1}k_{r+2}}^{l_{1}\cdot\cdot\cdot l_{s}}).$$ Similarly,
we can also define the higher order covariant derivative of
$\alpha$:
$$\nabla^3 \alpha=\nabla(\nabla^2 \alpha), ... ,$$
and so on. For simplicity, the coordinate expression of the
covariant differentiation will usually be denoted by indices without
semicolons, e.g. $$u_{i}, \quad u_{ij} \quad \mbox{or} \quad
u_{ijk}$$ for a function $u: M\rightarrow \mathbb{R}$.

Our convention for the Riemannian curvature (3,1)-tensor Rm is
defined by
\begin{equation*}
Rm(X, Y)Z=-\nabla_{X}\nabla_{Y}Z+\nabla_{Y}\nabla_{X}Z+\nabla_{[X,
Y]}Z.
\end{equation*}
Pick a local coordinate chart $\{x^i\}_{i=1}^{n}$ of $M$. The
component of the (3,1)-tensor $Rm$ is defined by
\begin{equation*}
Rm\bigg({\frac{\partial}{\partial x^i}}, {\frac{\partial}{\partial
x^j}}\bigg){\frac{\partial}{\partial x^k}}=R_{ijk}^{\ \ \
l}{\frac{\partial}{\partial x^l}}
\end{equation*}
and $R_{ijkl}=g_{lm}R_{ijk}^{\ \ \ m}$. Then, we have the
standard commutation formulas (Ricci identities):
\begin{eqnarray*}\label{RI}
\alpha_{k_{1}\cdot\cdot\cdot k_{r};\ j i}^{l_{1}\cdot\cdot\cdot
l_{s}}-\alpha_{k_{1}\cdot\cdot\cdot k_{r};\ i
j}^{l_{1}\cdot\cdot\cdot l_{s}}=\sum_{a=1}^{r}R^{\ \ \ m}_{ijk_{l}}
\alpha_{k_{1}\cdot\cdot\cdot k_{a-1}m k_{a+1}\cdot\cdot\cdot
k_{r}}^{l_{1}\cdot\cdot\cdot l_{s}}-\sum_{b=1}^{s}R^{\ \ \
l_b}_{ijm} \alpha_{k_{1}\cdot\cdot\cdot k_{r}}^{l_{1}\cdot\cdot\cdot
l_{b-1}m l_{b+1}\cdot\cdot\cdot l_{r}}.
\end{eqnarray*}

Let $M$ be an immersed hypersurface in $\mathbb{R}^{n+1}$. Denote
$R_{ijkl}$ to be the Riemannian curvature of $M\subset
\mathbb{R}^{n+1}$ with the induced metric $g$. Pick a local
coordinate chart $\{x^i\}_{i=1}^{n}$ on $M$. Let $\nu$ be a given
unit normal and $h_{ij}$ be the second fundamental form $A$ of the
hypersurface with respect to $\nu$, that is
$$h_{ij}=-\langle\frac{\partial^2 X}{\partial x^i\partial x^j}, \nu\rangle_{\mathbb{R}^{n+1}}.$$
Recalling the following identities
\begin{equation}\label{Gauss for}
\nabla_i \nabla_j X=-h_{ij}\nu, \quad \quad \mbox{Gauss formula}
\end{equation}

\begin{equation}\label{Wein for}
\nabla_i \nu=h_{ij}X^j, \quad \quad \mbox{Weingarten formula}
\end{equation}

\begin{equation*}\label{Gauss}
R_{ijkl}=h_{ik}h_{jl}-h_{il}h_{jk}, \quad \quad \mbox{Gauss
equation}
\end{equation*}

\begin{equation}\label{Codazzi}
\nabla_{k}h_{ij}=\nabla_{j}h_{ik} \quad \quad \mbox{Codazzi,
equation}
\end{equation}
where $X^j=g^{ik}\nabla_i X$
Moreover, we have
\begin{eqnarray}\label{2rd}
\nabla_{i}\nabla_{j}h_{kl}
&=&\nabla_{k}\nabla_{l}h_{ij}+h^{m}_{j}(h_{il}h_{km}-h_{im}h_{kl})+h^{m}_{l}(h_{ij}h_{km}-h_{im}h_{kj}).
\end{eqnarray}

\subsection{Star-shaped hypersurfaces in $\mathbb{R}^{n+1}$}

Let $M$ be a star-shaped hypersurface in $\mathbb{R}^{n+1}$ which can represented by
\begin{eqnarray*}
M=\rho(x)x \quad \mbox{for} \quad x \in \mathbb{S}^n,
\end{eqnarray*}
where $X$ is the position vector of the hypersurface $M$ in $\mathbb{R}^{n+1}$.

Let $e_1,...,e_n$ be a smooth local orthonormal frame field on
$\mathbb{S}^n$ and $e_\rho$ be the radial vector field
in $\mathbb{R}^{n+1}$. $D_i\rho=D_{e_i} \rho$, $D_iD_j\rho=D^2
\rho(e_i, e_j)$ denote the covariant derivatives of $u$ with respect
to the round metric $\sigma$ of $\mathbb{S}^n$. Then, the following formulas
hold:

(i) The tangential vector on $M$ is
\begin{eqnarray*}
X_{i}=\rho e_{i}+D_i\rho e_{\rho}
\end{eqnarray*}
and the corresponding outward unit normal vector is given by
\begin{eqnarray}\label{Nor}
\nu=\frac{1}{v}\left(e_\rho-\frac{1}{\rho^2} D^j\rho e_j\right),
\end{eqnarray}
where $v=\sqrt{1+\rho^{-2}|D \rho|^2}$ with $D^j \rho=\sigma^{ij}D_i\rho$.

(ii) The induced metric $g$ on $M$ has the form
\begin{equation*}
g_{ij}=\rho^2\sigma_{ij}+D_i\rho D_j\rho
\end{equation*}
and its inverse is given by
\begin{equation*}
g^{ij}=\frac{1}{\rho^2}\left(\sigma^{ij}-\frac{D^i\rho D^j\rho}{\rho^2
v^{2}}\right).
\end{equation*}

(iii) The second fundamental form of $M$ is given by
\begin{eqnarray*}
h_{ij}=\frac{1}{v}\left(-D_iD_j\rho+\rho
\sigma_{ij}+\frac{2}{\rho}D_i\rho D_j\rho\right)
\end{eqnarray*}
and
\begin{eqnarray}\label{h_ij}
h^{i}_{j}=\frac{1}{\rho
v}\left(\delta^{i}_{j}+[-\sigma^{ik}+\frac{D^i\rho
D^k\rho}{\rho^2v}]D_jD_k(\log \rho)\right).
\end{eqnarray}

The following Newton-Maclaurin inequality will be used frequently.
\begin{lemma}\textit{(\cite{Tr90, LT94})} \label{lemma1}
\textit{\ Let} $\lambda\in\mathbb{R}^n$. \textit{ For }$0\leq
l<k\leq n,$ $r>s\ge0, k\ge r, l\ge s$, \textit{\ the following is
the Newton-Maclaurin inequality }

(1)
\[
k(n-l+1)\sigma_{l-1}(\lambda)\sigma_{k}(\lambda)\leq
l(n-k+1)\sigma_{l}(\lambda)\sigma_{k-1}(\lambda).
\]

(2)
\[\big[\frac{\sigma_k(\lambda)/C^k_n}{\sigma_l(\lambda)/C^l_n}\big]^\frac{1}{k-l}
\le\big[\frac{\sigma_r(\lambda)/C^r_n}{\sigma_s(\lambda)/C^s_n}\big]^{\frac{1}{r-s}},
\textit{for }  \lambda\in \Gamma_k.\]
\end{lemma}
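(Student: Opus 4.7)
The plan is to reduce both inequalities to the single monotonicity statement that the ratios $p_{j+1}/p_j$ are non-increasing in $j$ on $\Gamma_k$, where $p_j := \sigma_j(\lambda)/\binom{n}{j}$ is the normalized $j$-th symmetric function. The starting point is Newton's inequality
\[
p_j(\lambda)^2 \ge p_{j-1}(\lambda)\, p_{j+1}(\lambda), \qquad 1\le j\le n-1,
\]
valid for every $\lambda\in\mathbb{R}^n$. This is the classical real-rooted-polynomial inequality: apply it to $F(t)=\prod_{i=1}^n(t-\lambda_i)$, whose derivatives $F^{(j-1)}(t)$ remain real-rooted by Rolle's theorem, and read off the coefficients of $F^{(j-1)}(t)/(n-j+1)!$ at two consecutive degrees; comparing the discriminant-type condition at those roots yields $p_j^2\ge p_{j-1}p_{j+1}$. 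I would state this as the one external ingredient.

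For part (1), I first rewrite the target inequality in normalized form. Using $\binom{n}{l}/\binom{n}{l-1}=(n-l+1)/l$ and $\binom{n}{k-1}/\binom{n}{k}=k/(n-k+1)$, the inequality $k(n-l+1)\sigma_{l-1}\sigma_k\le l(n-k+1)\sigma_l\sigma_{k-1}$ becomes exactly
\[
p_{l-1}(\lambda)\,p_k(\lambda)\le p_l(\lambda)\,p_{k-1}(\lambda), \qquad 1\le l\le k.
\]
On $\Gamma_k$ all $p_j$ with $j\le k$ are strictly positive, so Newton's inequality can be rearranged to $p_{j+1}/p_j\le p_j/p_{j-1}$, i.e., $j\mapsto p_j/p_{j-1}$ is non-increasing. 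Telescoping this monotonicity from $j=l$ down to $j=k$ gives $p_k/p_{k-1}\le p_l/p_{l-1}$, which is (1).

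For part (2), I set $a_j:=p_{j+1}/p_j$, so by the previous step $a_0\ge a_1\ge\cdots\ge a_{k-1}>0$ on $\Gamma_k$. The key identity is
\[
\left[\frac{\sigma_k/\binom{n}{k}}{\sigma_l/\binom{n}{l}}\right]^{\frac{1}{k-l}}
=\Bigl(\prod_{j=l}^{k-1}a_j\Bigr)^{\!\!\frac{1}{k-l}},
\]
which exhibits the left-hand side of (2) as the geometric mean of $a_l,\dots,a_{k-1}$, and similarly the right-hand side as the geometric mean of $a_s,\dots,a_{r-1}$. I would then invoke the elementary fact that for a non-increasing positive sequence the geometric mean over a contiguous window is non-increasing in both endpoints: extending the window to the right inserts only smaller terms, while removing terms from the left drops only larger terms. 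Under the hypotheses $s\le l$ and $r\le k$, the window $[l,k-1]$ is shifted to the right of $[s,r-1]$ (both endpoints move right), so its geometric mean is $\le$ that over $[s,r-1]$, which is (2).

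The one place that actually requires care is the monotonicity-of-geometric-means lemma for the two different hypotheses $s\le l$ and $r\le k$ acting on different endpoints; I would prove it by two successive one-endpoint moves (first shift the left endpoint from $s$ up to $l$, then shift the right endpoint from $r-1$ up to $k-1$), each step being a weighted AM–GM comparison that uses only $a_j\le a_{j-1}$. Apart from this bookkeeping and the appeal to Newton's inequality, both parts are purely algebraic, so I do not anticipate any genuine analytic obstacle.
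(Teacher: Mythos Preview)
The paper does not supply its own proof of this lemma; it is quoted as a known result with citations to \cite{Tr90, LT94}. Your argument is the standard derivation from Newton's inequalities and is essentially correct. Two small remarks are worth making.

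First, you prove part (1) only on $\Gamma_k$, and that is in fact the correct hypothesis: as literally stated for arbitrary $\lambda\in\mathbb{R}^n$, (1) can fail once $k-l\ge 2$ (for instance $n=k=3$, $l=1$, $\lambda=(-1,-1,1)$ gives $9\le 1$). The paper's phrasing is slightly loose here; the cited sources work inside the Garding cone, and in every application in the paper $\lambda\in\Gamma_k$ anyway.

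Second, in your two-step proof of (2), the intermediate window $[l,r-1]$ can be empty since nothing in the hypotheses forces $l<r$. Route the moves through $[s,k-1]$ instead: first extend $[s,r-1]$ on the right to $[s,k-1]$ (appending the smaller terms $a_r,\dots,a_{k-1}$ can only lower the geometric mean), then delete $a_s,\dots,a_{l-1}$ on the left (dropping the largest terms lowers it further). With this adjustment the bookkeeping is clean and the rest of your argument goes through unchanged.
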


To handle the ellipticity of the equation \eqref{Eq}, we need the
following important proposition and its proof is the same as
Proposition 2.2 in \cite{GZ19}.
\begin{proposition}\label{ellipticconcave}
Let $M$ be a smooth $k-1$ convex closed hypersurface in $\mathbb{R}^{n+1}$
and $\alpha_{l}(X)\geq 0$ for $\forall X \in M$ and $0\leq l\leq k-2$. Then the operator
\begin{eqnarray}
G(h_{ij}(X),
X)\allowdisplaybreaks\notag=\frac{\sigma_k(\kappa(X))}{\sigma_{k-1}
(\kappa(X))}
-\sum_{l=0}^{k-2}\alpha_{l}(X)\frac{\sigma_{l}(\kappa(X))}{\sigma_{k-1}(\kappa(X))}
\end{eqnarray}
is elliptic and concave about $h_{ij}(X)$.
\end{proposition}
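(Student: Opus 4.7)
The plan is to reduce the problem to the well-understood concavity and monotonicity properties of the quotient symmetric functions $\sigma_m/\sigma_{k-1}$ on the cone $\Gamma_{k-1}$. Since $\sigma_{k-1}(\kappa) > 0$ throughout $\Gamma_{k-1}$, I can rewrite
\[G(h_{ij}(X), X) = Q_k(\kappa) - \sum_{l=0}^{k-2} \alpha_l(X)\, Q_l(\kappa), \qquad Q_m(\kappa) := \frac{\sigma_m(\kappa)}{\sigma_{k-1}(\kappa)},\]
and analyze each summand separately on $\Gamma_{k-1}$.

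First, for the leading piece $Q_k = \sigma_k/\sigma_{k-1}$, I would appeal to the classical results of Reilly and Lin--Trudinger: on $\Gamma_{k-1}$ this ratio is concave as a function of the eigenvalues and has strictly positive partial derivatives $\partial Q_k/\partial \lambda_i > 0$ for every index $i$. Next, for each $0 \le l \le k-2$, I claim that $Q_l$ is convex and weakly decreasing in each $\lambda_i$ throughout $\Gamma_{k-1}$. Convexity of $\sigma_l/\sigma_m$ on $\Gamma_m$ for $l < m$ is standard, following from the concavity of $(\sigma_m/\sigma_l)^{1/(m-l)}$. The monotonicity $\partial Q_l/\partial \lambda_i \le 0$ reduces, after differentiating the quotient, to a Newton--MacLaurin-type inequality of the form $\sigma_{l-1}(\lambda|i)\, \sigma_{k-1}(\lambda) \le \sigma_{k-2}(\lambda|i)\, \sigma_l(\lambda)$, which is a consequence of Lemma \ref{lemma1}(1) applied both to $\lambda$ and to the truncated vector $\lambda|i$.

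Since each $\alpha_l(X) \ge 0$ by hypothesis, every correction $-\alpha_l(X) Q_l$ is then concave and monotone non-decreasing in $\kappa$. Adding the contributions, $G$ becomes a symmetric function of the principal curvatures which is concave and has strictly positive first derivative in each $\lambda_i$ throughout $\Gamma_{k-1}$. By the standard eigenvalue principle (Davis--Ball), a symmetric concave function of eigenvalues induces a concave operator on symmetric matrices, and positivity of the eigenvalue derivatives transfers to positive-definiteness of the Jacobian with respect to $h_{ij}$. Together these give both concavity and ellipticity of $G(h_{ij}(X), X)$.

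The main obstacle I anticipate is the clean verification of the monotonicity $\partial Q_l/\partial \lambda_i \le 0$ for $l < k-1$: this is precisely the step that forces the use of the $(k-1)$-convex cone rather than the smaller $\Gamma_k$, and it is the sign in Newton--MacLaurin that prevents the negative lower-order corrections $-\alpha_l Q_l$ from destroying the ellipticity furnished by the leading quotient $\sigma_k/\sigma_{k-1}$. Once this inequality is in hand, the remainder of the argument is a purely formal assembly.
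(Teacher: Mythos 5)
Your argument is correct and is essentially the same as the one the paper appeals to (the paper simply says ``the proof is the same as Proposition~2.2 in \cite{GZ19}'', which is precisely this decomposition into the concave, monotone increasing quotient $\sigma_k/\sigma_{k-1}$ and the convex, monotone decreasing quotients $\sigma_l/\sigma_{k-1}$ on $\Gamma_{k-1}$, plus the eigenvalue-to-matrix transfer). The only minor imprecision is in the last step of the monotonicity check: after writing $\sigma_m(\lambda)=\sigma_m(\lambda|i)+\lambda_i\sigma_{m-1}(\lambda|i)$ the $\lambda_i$-terms cancel and the inequality $\sigma_{l-1}(\lambda|i)\sigma_{k-1}(\lambda)\le\sigma_l(\lambda)\sigma_{k-2}(\lambda|i)$ reduces to a single Newton--Maclaurin inequality $\sigma_{l-1}\sigma_{k-1}\le\sigma_l\sigma_{k-2}$ applied to $\lambda|i\in\Gamma_{k-2}$ alone, not to both $\lambda$ and $\lambda|i$; this is a cleaner route to the same conclusion and does not affect the validity of your proof.
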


\section{The a prior estimates}

In order to prove Theorem \ref{Main}, we consider the family of
equations as in \cite{An, Li-Sh} for $0\leq t\leq 1$
\begin{eqnarray}\label{Eq2}
\frac{\sigma_k(\kappa(X))}{\sigma_{k-1}
(\kappa(X))}-\sum_{l=0}^{k-2}t\alpha_{l}(X)\frac{\sigma_{l}(\kappa(X))}{\sigma_{k-1}(\kappa(X))}-\alpha_{k-1}(X,
t)=0,
\end{eqnarray}
where
\begin{eqnarray*}
\alpha_{k-1}(X,
t)=t\alpha_{k-1}(X)+(1-t)\phi(|X|)\frac{\sigma_k(e)}{\sigma_{k-1}
(e)}\frac{1}{|X|}
\end{eqnarray*}
and $\phi$ is a positive function which satisfies the following
conditions:

(a) $\phi(\rho)>0$,

(b) $\phi(\rho)>1$, for $\rho\leq r_1$,

(c) $\phi(\rho)<1$, for $\rho\geq r_2$,

(d) $\phi^{\prime}(\rho)<0$.

\subsection{$C^0$ Estimates}

Now, we can prove the following proposition which asserts all the
solution of the equation \eqref{Eq} have uniform $C^0$ bound.

\begin{proposition}\label{C^0}
Assume $0\leq\alpha_l(X) \in C^\infty(\mathbb{R}^{n+1})$.
Under the assumptions \eqref{ASS1} and \eqref{ASS2} mentioned in
Theorem \ref{Main} , if $M\subset \mathbb{R}^{n+1}$ is a
star-shaped, $k-1$-convex hypersurface satisfied the equation
\eqref{Eq2} for a given $t \in [0, 1]$, then
\begin{eqnarray*}
r_1<\rho(X)<r_2, \quad \forall \ X \in M.
\end{eqnarray*}
\end{proposition}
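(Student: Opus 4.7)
The plan is to apply the maximum principle to the radial height $\rho(x)=|X(x)|$ regarded as a function on $\mathbb{S}^n$, pitting the monotonicity of the $\sigma_k/\sigma_j$ quotients against the strict interpolating bounds on $\phi$. At the point $x_0\in\mathbb{S}^n$ where $\rho$ attains its maximum, one has $D\rho(x_0)=0$ and $D^2\rho(x_0)\leq 0$; substituting into formula \eqref{h_ij} yields the matrix inequality $h^i_j(X_0)\geq \rho_{\max}^{-1}\delta^i_j$, so every principal curvature satisfies $\kappa_i(X_0)\geq \rho_{\max}^{-1}>0$, and in particular $\kappa(X_0)\in\Gamma_n$. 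An entirely symmetric computation at the minimum point $x_1$ gives $\kappa_i(X_1)\leq \rho_{\min}^{-1}$.

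The next step exploits the monotonicity supplied by Newton--MacLaurin (Lemma \ref{lemma1}): on $\Gamma_k$, the ratio $\sigma_k/\sigma_{k-1}$ is increasing in each $\kappa_i$, while $\sigma_l/\sigma_{k-1}$ is decreasing for $0\leq l\leq k-2$. Combining these with the curvature bound at $x_0$ and substituting into \eqref{Eq2}, then multiplying through by $\sigma_{k-1}(e)/\rho_{\max}^{k-1}$ and unfolding the definition of $\alpha_{k-1}(X_0,t)$, one should obtain
\begin{equation*}
\frac{\sigma_k(e)}{\rho_{\max}^k}\ \leq\ t\sum_{l=0}^{k-1}\alpha_l(X_0)\,\frac{\sigma_l(e)}{\rho_{\max}^l}\ +\ (1-t)\phi(\rho_{\max})\,\frac{\sigma_k(e)}{\rho_{\max}^k}.
\end{equation*}
Under the assumption $\rho_{\max}\geq r_2$, hypothesis \eqref{ASS1} bounds the first sum by $\sigma_k(e)/\rho_{\max}^k$, reducing the display to $(1-t)(1-\phi(\rho_{\max}))\leq 0$. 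Since $\phi(\rho_{\max})<1$ strictly on $\{\rho\geq r_2\}$, this is impossible, proving $\rho_{\max}<r_2$. The lower bound is mirror-symmetric, using \eqref{ASS2} and the strict inequality $\phi(\rho_{\min})>1$.

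The main obstacle I anticipate is the degenerate endpoint $t=1$, where the factor $(1-t)$ vanishes and the contradiction above collapses to the tautology $1\leq 1$. To promote the weak bound to a strict one there, one should revive the slack using hypothesis \eqref{ASS3}: equality in \eqref{ASS1} combined with the strict ellipticity and concavity of the operator $G$ from Proposition \ref{ellipticconcave} forces $h^i_j\equiv\rho_{\max}^{-1}\delta^i_j$ at $x_0$; the strong maximum principle then propagates this identity so that $M$ would have to be a round sphere of radius $\rho_{\max}$, a configuration ruled out by the radial monotonicity $\partial_\rho(\rho^{k-l}\alpha_l)\leq 0$ on $r_1\leq|X|\leq r_2$. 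The minimum case at $t=1$ is handled analogously, with all inequalities reversed.
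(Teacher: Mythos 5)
Your main computation matches the paper's proof exactly: both apply the maximum principle to $\rho$ on $\mathbb{S}^n$, read off $h^i_j\ge\rho^{-1}\delta^i_j$ from \eqref{h_ij} at the critical point (the paper uses superadditivity of the concave, degree-one homogeneous quotients rather than monotonicity, but this is the same fact), substitute into \eqref{Eq2}, multiply by $\sigma_{k-1}(e)/\rho_{\max}^{k-1}$, and invoke \eqref{ASS1} together with $\phi(\rho)<1$. Your displayed inequality
$$\frac{\sigma_k(e)}{\rho_{\max}^k}\ \le\ t\sum_{l=0}^{k-1}\alpha_l(X_0)\frac{\sigma_l(e)}{\rho_{\max}^l}+(1-t)\phi(\rho_{\max})\frac{\sigma_k(e)}{\rho_{\max}^k}$$
and its reduction to $(1-t)\bigl(1-\phi(\rho_{\max})\bigr)\le 0$ are precisely the combined inequality and the contradiction the paper claims. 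The lower bound is treated symmetrically, as you say.

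You are also right to flag the $t=1$ degeneracy, and this is in fact a gap in the paper's own proof. The paper asserts the strict contradiction ``in view of \eqref{ASS1} and the fact $\phi(\rho)<1$ for $\rho\ge r_2$,'' but at $t=1$ the $\phi$-term is multiplied by $(1-t)=0$, and since \eqref{ASS1} is only a non-strict inequality, the chain collapses to $1\le 1$. However, your proposed repair does not go through as stated. Hypothesis \eqref{ASS3} is assumed only on $r_1\le|X|\le r_2$, so it tells you nothing at a maximum with $\rho_{\max}>r_2$; and even if one grants that equality throughout forces the round sphere of radius $\rho_{\max}\ge r_2$, such a sphere \emph{is} an admissible solution exactly when \eqref{ASS1} holds with equality at that radius, and nothing in the hypotheses of Theorem \ref{Main} excludes this. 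The ``strong maximum principle propagates umbilicity'' step would also need an actual differential inequality to apply, which you do not supply. The clean fixes are either to assume strict inequality in \eqref{ASS1}--\eqref{ASS2} when $t=1$ is genuinely needed, or to settle for the non-strict conclusion $r_1\le\rho\le r_2$, which is all the degree-theory argument in Section 4 really requires.
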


\begin{proof}
Assume $\rho(x)$ attains its maximum at $x_0 \in \mathbb{S}^n$ and
$\rho(x_0)\geq r_2$, then recalling \eqref{h_ij}
\begin{eqnarray*}
h^{i}_{j}=\frac{1}{uv}\left(\delta^{i}_{j}+[-\sigma^{im}+\frac{D^i\rho
D^m\rho}{\rho^2v}]D_jD_m(\log \rho)\right)
\end{eqnarray*}
which implies
\begin{eqnarray*}
h^{i}_{j}(x_0)=\frac{1}{\rho}[\delta^{i}_{j}-\sigma^{im}D_jD_m(\log
\rho)]\geq\frac{1}{\rho}\delta^{i}_{j}.
\end{eqnarray*}
Note that $\frac{\sigma_k}{\sigma_{k-1}}$ and $\frac{\sigma_{k-1}}{\sigma_{l}}$ for $0\leq l\leq k-2$ is concave in
$\Gamma_{k-1}$. Thus
\begin{eqnarray*}
\frac{\sigma_k}{\sigma_{k-1}}(h^{i}_{j}) \ge
\frac{\sigma_k}{\sigma_{k-1}}(\frac{1}{\rho}\delta^{i}_{j})
+\frac{\sigma_k}{\sigma_{k-1}}(-\frac{1}{\rho}\sigma^{im}D_jD_m(\log
\rho))
\ge\frac{\sigma_k}{\sigma_{k-1}}(\frac{1}{\rho}\delta^{i}_{j}).
\end{eqnarray*}
So,
\begin{eqnarray*}
\frac{\sigma_k(\kappa(X))}{\sigma_{k-1} (\kappa(X))}\geq
\frac{\sigma_k(e)}{\sigma_{k-1} (e)}\frac{1}{\rho}.
\end{eqnarray*}
Similarly,
\begin{eqnarray*}
\frac{\sigma_{l} (\kappa(X))}{\sigma_{k-1} (\kappa(X))}\leq
\frac{\sigma_{l} (e)}{\sigma_{k-1} (e)}\rho^{k+1-l}.
\end{eqnarray*}
Thus, we obtain by combining the above two inequalities
\begin{eqnarray*}
\frac{\sigma_k(e)}{\sigma_{k-1} (e)}\frac{1}{\rho}-
\sum_{l=0}^{k-2}t\alpha_{l}(X)\frac{\sigma_{l} (e)}{\sigma_{k-1}
(e)}\rho^{k-1-l}\leq\alpha_{k-1}(X, t)
\end{eqnarray*}
which is in contradiction with
\begin{eqnarray*}
\frac{\sigma_k(e)}{\sigma_{k-1} (e)}\frac{1}{\rho}-
\sum_{l=0}^{k-2}t\alpha_{l}(X)\frac{\sigma_{l} (e)}{\sigma_{k-1}
(e)}\rho^{k-1-l}>\alpha_{k-1}(X, t),
\end{eqnarray*}
in view of \eqref{ASS1} and the fact $\phi(\rho)<1$ for $\rho\geq
r_2$.
\end{proof}

Now, we prove the following uniqueness result.

\begin{proposition}\label{Uni}
For $t=0$, there exists an unique admissible solution of the
equation \eqref{Eq2}, namely $M=\{X: \rho(X)=\rho_0\}$, where
$\rho_0$ satisfies $\varphi(\rho_0)=1$.
\end{proposition}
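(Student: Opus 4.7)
The plan is to treat existence and uniqueness separately. First, conditions (a)--(d) on $\phi$ --- in particular $\phi(\rho) > 1$ for $\rho \leq r_1$, $\phi(\rho) < 1$ for $\rho \geq r_2$, and $\phi' < 0$ --- guarantee via the intermediate value theorem a unique $\rho_0 \in (r_1, r_2)$ with $\phi(\rho_0) = 1$. For existence I would verify by direct substitution that the round sphere $M_0 = \{|X| = \rho_0\}$ solves the $t=0$ equation: on $M_0$ the principal curvatures are all equal to $1/\rho_0$, so $\sigma_k(\kappa)/\sigma_{k-1}(\kappa) = (1/\rho_0)\sigma_k(e)/\sigma_{k-1}(e)$, which matches the right-hand side $\phi(\rho_0)(1/\rho_0)\sigma_k(e)/\sigma_{k-1}(e)$ precisely because $\phi(\rho_0)=1$.

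For uniqueness, let $M$ be any admissible $(k-1)$-convex, star-shaped solution with radial function $\rho$ on $\mathbb{S}^n$, and let $x_0,x_1 \in \mathbb{S}^n$ be a maximum and a minimum point of $\rho$ respectively. At $x_0$ one has $D\rho(x_0)=0$ and $D^2\log\rho(x_0)\leq 0$, and formula \eqref{h_ij} then yields $h^i_j(x_0) \geq \delta^i_j/\rho_{\max}$. Following the same reasoning as in the proof of Proposition \ref{C^0} --- using the concavity (equivalently, the monotonicity) of $\sigma_k/\sigma_{k-1}$ on $\Gamma_{k-1}$ --- one obtains
$$\frac{\sigma_k(\kappa)}{\sigma_{k-1}(\kappa)}(x_0) \geq \frac{\sigma_k(e)}{\sigma_{k-1}(e)} \frac{1}{\rho_{\max}}.$$
Substituting into the $t=0$ equation gives $\phi(\rho_{\max}) \geq 1 = \phi(\rho_0)$, and the strict monotonicity $\phi' < 0$ then forces $\rho_{\max} \leq \rho_0$.

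The symmetric argument at $x_1$, where $D^2 \log \rho(x_1) \geq 0$ yields $h^i_j(x_1) \leq \delta^i_j/\rho_{\min}$ and hence $(\sigma_k/\sigma_{k-1})(\kappa(x_1)) \leq (\sigma_k(e)/\sigma_{k-1}(e))(1/\rho_{\min})$, produces the reverse inequality $\phi(\rho_{\min}) \leq 1 = \phi(\rho_0)$, and so $\rho_{\min} \geq \rho_0$. Combining, $\rho_{\max} \leq \rho_0 \leq \rho_{\min}$, forcing $\rho \equiv \rho_0$ and $M = M_0$. The only subtle point is the eigenvalue comparison at the critical points: since at $x_0, x_1$ the induced metric $g = \rho^2 \sigma$ is a scalar multiple of $\sigma$, any $\sigma$-orthonormal frame diagonalizing $D^2 \log \rho$ simultaneously diagonalizes the shape operator, which legitimizes the matrix inequalities and the subsequent application of the monotonicity of $\sigma_k/\sigma_{k-1}$ on $\Gamma_{k-1}$.
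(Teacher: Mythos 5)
Your proof is correct and follows essentially the same route as the paper: comparing the shape operator to a multiple of the identity at the critical points of $\rho$ via formula \eqref{h_ij}, invoking monotonicity (or, as the paper phrases it, concavity plus superadditivity) of $\sigma_k/\sigma_{k-1}$ to bound the curvature quotient, and then using the strict decrease of $\phi$ to force $\rho_{\max}\le\rho_0\le\rho_{\min}$. You additionally spell out the existence of the round-sphere solution and the unique $\rho_0$ via the intermediate value theorem, steps the paper leaves implicit.
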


\begin{proof}
Let $X$ be a solution of \eqref{Eq2} for $t=0$
\begin{eqnarray*}
\frac{\sigma_k(\kappa(X))}{\sigma_{k-1} (\kappa(X))}
-\phi(|X|)\frac{\sigma_k(e)}{\sigma_{k-1} (e)}\frac{1}{|X|}=0.
\end{eqnarray*}
Assume $\rho(x)$ attains its maximum $\rho_{max}$ at $x_0 \in
\mathbb{S}^n$, then
\begin{eqnarray*}
\frac{\sigma_k(\kappa(X))}{\sigma_{k-1} (\kappa(X))}\geq
\frac{\sigma_k(e)}{\sigma_{k-1} (e)}\frac{1}{\rho},
\end{eqnarray*}
which implies
\begin{eqnarray*}
\varphi(\rho_{max})\geq 1.
\end{eqnarray*}
Similarly,
\begin{eqnarray*}
\varphi(\rho_{min})\leq 1.
\end{eqnarray*}
Thus, since $\varphi$ is a decreasing function, we have
\begin{eqnarray*}
\varphi(\rho_{min})=\varphi(\rho_{max})=1.
\end{eqnarray*}
We conclude
\begin{eqnarray*}
\rho(X)=\rho_0 \quad \mbox{for} \quad X \in M,
\end{eqnarray*}
where $\rho_0$ is the unique solution of $\varphi(\rho_0)=1$.
\end{proof}

\subsection{$C^1$ Estimates}

In this section, we establish the gradient estimate for the
equation. The treatment of this section follows largely from
\cite{Ca, Guan-Ren15}. We can rewritten the equation \eqref{Eq2} as:
\begin{eqnarray}\label{Eq1}
G(h_{ij}(X), X, t)\allowdisplaybreaks\notag
=\frac{\sigma_k(\kappa(X))}{\sigma_{k-1} (\kappa(X))}
-\sum_{l=0}^{k-2}t\alpha_{l}(X)\frac{\sigma_{l}(\kappa(X))}{\sigma_{k-1}(\kappa(X))}=\alpha_{k-1}(X,
t)
\end{eqnarray}
For the convenience of notations, we will denote
\begin{eqnarray*}
G_k(h_{ij}(X))=\frac{\sigma_k(\kappa(X))}{\sigma_{k-1}
(\kappa(X))},   \quad
G_l(h_{ij}(X))=-\frac{\sigma_{l}(\kappa(X))}{\sigma_{k-1}(\kappa(X))},
\end{eqnarray*}
and
\begin{eqnarray*}
G^{ij}(\kappa(X))=\frac{\partial G}{\partial h_{ij}}, \quad G^{ij,
r s}(\kappa(X))=\frac{\partial^2 G}{\partial h_{ij}h_{rs}}.
\end{eqnarray*}

Recalling that a star-shaped hypersurface $M$ in $\mathbb{R}^{n+1}$ can represented by
\begin{eqnarray*}
X(x)=\rho(x)x \quad \mbox{for} \quad x \in \mathbb{S}^n,
\end{eqnarray*}
where $X$ is the position vector of the hypersurface $M$ in $\mathbb{R}^{n+1}$.
We can get the following gradient bound.

\begin{proposition}\label{C^1}
Under the assumption \eqref{ASS3}, if the $k-1$ convex, star-shaped hypersurface
$M$ satisfies \eqref{Eq} and $\rho$ has positive upper and lower
bound, then there exists a constant $C$ depending on the minimum and
maximum values of $\rho$ such that
\begin{eqnarray*}
|D \rho(x)|\leq C, \quad \mbox{for} \quad \forall X \in \mathbb{S}^n.
\end{eqnarray*}
\end{proposition}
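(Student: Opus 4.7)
The plan is to reduce the gradient bound to an upper bound on $v=\sqrt{1+\rho^{-2}|D\rho|^2}$; since Proposition~\ref{C^0} gives $r_1<\rho<r_2$, this is equivalent to bounding $|D\rho|$. I would apply the maximum principle on $M$ (with its induced metric $g$) to an auxiliary function of the form
\[
\Phi := \log v + \gamma(\rho),
\]
where $\gamma\colon[r_1,r_2]\to\mathbb{R}$ is smooth and to be chosen; for concreteness take $\gamma(\rho)=A(r_2-\rho)$ with $A\gg 1$. Because $\rho$ is pinched, bounding $\Phi$ is equivalent to bounding $v$.

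Suppose $\Phi$ achieves its maximum at $x_0\in M$, and pick a $g$-orthonormal frame near $x_0$ that diagonalizes $(h_{ij})$. The critical-point condition gives $\nabla_i v/v = -\gamma'(\rho)\,\nabla_i\rho$, and by the ellipticity of $G$ (Proposition~\ref{ellipticconcave}) we have $G^{ij}\nabla_i\nabla_j\Phi(x_0)\le 0$. To expand the left-hand side, I would use the standard identities that follow from Gauss~\eqref{Gauss for}, Weingarten~\eqref{Wein for}, Codazzi~\eqref{Codazzi}, and the Ricci identity~\eqref{2rd}, namely
\[
\nabla_i\langle X,\nu\rangle = h_{ij}\langle X,X^j\rangle, \qquad \nabla_i\nabla_j\bigl(\tfrac12|X|^2\bigr) = g_{ij}-h_{ij}\langle X,\nu\rangle,
\]
together with $\langle X,\nu\rangle=\rho/v$, so as to rewrite $\nabla_i v$ and $\nabla_i\nabla_j v$ in terms of $h_{ij}$, $\nabla_k h_{ij}$, and tangential components of $X$. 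Differentiating the equation~\eqref{Eq1} along $M$ once produces
\[
G^{ij}\nabla_k h_{ij} = \nabla_k\alpha_{k-1}(X,t)+\sum_{l=0}^{k-2} t\,\nabla_k\alpha_l(X)\,\frac{\sigma_l(\kappa)}{\sigma_{k-1}(\kappa)},
\]
and concavity of $G$ (Proposition~\ref{ellipticconcave}) permits us to discard the quadratic terms $G^{ij,rs}\nabla_k h_{ij}\nabla_k h_{rs}$ whenever they have the favorable sign.

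Substituting these ingredients into $G^{ij}\nabla_i\nabla_j\Phi(x_0)\le 0$ and using the critical-point relation to eliminate linear $\nabla_i v$ contributions, I expect to reach a schematic inequality
\[
c\,(v^2-1)\sum_i G^{ii}\kappa_i \;\le\; \gamma'(\rho)\,(\text{good terms})+\gamma''(\rho)\,(\text{bounded})+\mathcal{R},
\]
in which $\mathcal{R}$ collects all contributions of $\nabla\alpha_l(X)$. Decomposing $\nabla_k\alpha_l(X)=(\partial_\rho\alpha_l)\nabla_k\rho+(\text{spherical})$ and regrouping so that the radial piece appears with weight $\rho^{k-l-1}\sigma_l/\sigma_{k-1}$, assumption~\eqref{ASS3}, which is equivalent to $\partial_\rho\alpha_l\le -\tfrac{k-l}{\rho}\alpha_l$, is precisely what is required to give $\mathcal{R}$ the correct sign and absorb it into the good terms. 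Choosing $A$ sufficiently large then forces $v(x_0)\le C$, and the maximum principle propagates this bound to all of $M$.

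\textbf{Main obstacle.} The technical core of the argument is the second-covariant-derivative expansion of $\log v$ (equivalently, of $\langle X,\nu\rangle$) through the Ricci identity, and the careful bookkeeping ensuring that the radial derivatives of $\alpha_l(X)$ enter with exactly the weight $\rho^{k-l}$ mandated by~\eqref{ASS3}. One has to invoke the Newton--Maclaurin inequalities (Lemma~\ref{lemma1}) and exploit the homogeneity degrees of $\sigma_k/\sigma_{k-1}$ and $\sigma_l/\sigma_{k-1}$ (respectively $1$ and $l-k+1$) to dominate the cross terms by $\sum_i G^{ii}\kappa_i$, while controlling the curvature contributions from the Gauss equation that surface when commuting covariant derivatives. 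Once this algebra is arranged so that~\eqref{ASS3} is applied at the correct place, the remainder of the argument is a standard choice of $A$.
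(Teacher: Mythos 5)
Your proposal follows essentially the same route as the paper: a maximum-principle argument for an auxiliary function that combines $-\log\langle X,\nu\rangle$ (equivalently $\log v$, since $\langle X,\nu\rangle=\rho/v$) with a function of $\rho$, followed by differentiating the equation once to replace $G^{ij}\nabla_k h_{ij}$, decomposing the ambient gradient of $\alpha_l$ into the radial and tangential parts so that \eqref{ASS3} (in the equivalent form $(k-l)\alpha_l+\rho\,\partial_\rho\alpha_l\le 0$) kills the dangerous term, and invoking the Newton--Maclaurin inequality together with the equation to bound the ratios $\sigma_l/\sigma_{k-1}$ and the trace $\sum_i G^{ii}$. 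Two small remarks on details: the paper first fixes an orthonormal frame with $\langle X,e_i\rangle=0$ for $i\geq 2$ and only then rotates to diagonalize $h_{ij}$ (which is why $h_{1i}=0$ for $i\ge 2$ and only a single directional derivative $h_{ii;1}$ appears), so it is worth being explicit about this step rather than just ``diagonalizing''; and in this $C^1$ estimate concavity of $G$ plays no role --- the equation is differentiated only once, so no $G^{ij,rs}$ terms arise (they enter the $C^2$ estimate instead). The final inequality one actually lands on has the form $0\ge \bigl[2\gamma'-(\gamma')^2\langle X,\nu\rangle^2\bigr]\sum_i G^{ii}-C\bigl(\sum_l|G_l|+1\bigr)$ after completing the square on $G^{ii}h_{ii}^2$ and the $\gamma' h_{ii}$ terms, so the crucial coefficient multiplies $\sum_i G^{ii}$ rather than $\sum_i G^{ii}\kappa_i$ as in your schematic inequality; this is precisely where the lower bound $\sum_i G^{ii}\ge\frac{n-k+1}{k}$ is used.
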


\begin{proof}
First, we know from \eqref{Nor}
$$\langle X, \nu\rangle=\frac{\rho^2}{\sqrt{\rho^{2}+|D \rho|^2}},$$
it is sufficient to obtain a positive lower bound of $\langle X,
\nu\rangle$. We consider
\begin{eqnarray*}
\phi=-\log \langle X, \nu\rangle+\gamma(|X|^2),
\end{eqnarray*}
where $\gamma(t)$ is a function which will be chosen later.
Assume $X_0$ is the maximum value point of $\phi$. If $X$ is
parallel to the normal direction $\nu$ of at $X_0$, we have
$\langle X, \nu\rangle=|X|$. Thus, our result holds true.
So, we assume $X$ is not
parallel to the normal direction $\nu$ at $X_0$, we may choose the
local orthonormal frame $\{e_1,..., e_n\}$ on $M$ satisfying
\begin{eqnarray*}
\langle X, e_1\rangle\neq 0, \quad \mbox{and} \quad \langle X,
e_i\rangle=0, \quad i\geq 2.
\end{eqnarray*}
Then, we arrive at $X_0$,
\begin{eqnarray}\label{Par-1}
\langle X, \nu\rangle_i=2\langle X, \nu\rangle\gamma^{\prime}\langle
X, e_i\rangle
\end{eqnarray}
and
\begin{eqnarray*}
\phi_{ii}&=&-\frac{\langle X, \nu\rangle_{ii}}{\langle X,
\nu\rangle}+\frac{(\langle X, \nu\rangle_{i})^2}{\langle X,
\nu\rangle^2}+\gamma^{\prime
\prime}(|X|^{2}_{i})^2+\gamma^{\prime}|X|^2_{ii}
\\&=&-\frac{1}{\langle X, \nu\rangle}[h_{ii; 1}\langle X, e_1\rangle+h_{ii}-h^{2}_{ii}\langle X, \nu\rangle]
\\&&+4[(\gamma^{\prime})^2+\gamma^{\prime \prime}]\langle X,
e_1\rangle^2\delta_{1i}+\gamma^{\prime}[2-2\langle X, \nu\rangle
h_{ii}]
\end{eqnarray*}
in view of
\begin{eqnarray*}
\langle X, \nu\rangle_{ii}=h_{ii}+h_{ii; l}\langle X,
e_l\rangle-h_{im}h^{m}_{i}\langle X, \nu\rangle.
\end{eqnarray*}
By Weingarten formula \eqref{Wein for} and \eqref{Par-1}, we have at
$X_0$
\begin{eqnarray}\label{Par-1-1}
h_{11}=2\langle X, \nu\rangle\gamma^{\prime}, \quad h_{1i}=0, \
i\geq 2.
\end{eqnarray}
Therefore, we can rotate the coordinate system such that
$\{e_i\}_{i=1}^{n}$ are the principal curvature directions of the
second fundamental form $(h_{ij})$, i.e. $h_{ij}=h_{ii}\delta_{ij}$
Thus,
\begin{eqnarray*}
G^{ii}\phi_{ii}&=&-\frac{\langle X, e_1\rangle}{\langle X,
\nu\rangle}G^{ii}h_{ii; 1}-\frac{1}{\langle X,
\nu\rangle}G^{ii}h_{ii}+G^{ii}h^{2}_{ii}
\\&&+4[(\gamma^{\prime})^2+\gamma^{\prime \prime}]G^{11}\langle X, e_1\rangle^2+\gamma^{\prime}G^{ii}[2-2\langle X, \nu\rangle h_{ii}].
\end{eqnarray*}
Note that
$$G^{ij}h_{ij}=G-\sum_{l=0}^{k-2}(k-l)t\alpha_l G_l=\alpha_{k-1}(X, t)-\sum_{l=0}^{k-2}(k-l)t\alpha_l G_l$$
and
$$G^{ij}h_{ij; 1}=\nabla_1\alpha_{k-1}(X, t)-\sum_{l=0}^{k-2}t\nabla_1\alpha_l G_l.$$
We conclude
\begin{eqnarray}\label{C1-3}
G^{ii}\phi_{ii}&=&\frac{\langle X, e_1\rangle}{\langle X,
\nu\rangle}\bigg(-\nabla_1\alpha_{k-1}(X,
t)+\sum_{l=0}^{k-2}t\nabla_1\alpha_l G_l\bigg)\\
\nonumber&&+\frac{1}{\langle X, \nu\rangle}\bigg(-\alpha_{k-1}(X,
t)+\sum_{l=0}^{k-2}(k-l)t\alpha_l G_l\bigg)\\
\nonumber&&+G^{ii}h^{2}_{ii}+4[(\gamma^{\prime})^2+\gamma^{\prime
\prime}]G^{11}\langle X,
e_1\rangle^2+\gamma^{\prime}G^{ii}[2-2\langle X, \nu\rangle h_{ii}]
\\ \nonumber&=&\frac{1}{\langle X, \nu\rangle}
\bigg(-\langle X, e_1\rangle\nabla_1 \alpha_{k-1}(X,
t)-\alpha_{k-1}(X, t)\bigg)
\\ \nonumber&&+\frac{1}{\langle X, \nu\rangle}\sum_{l=0}^{k-2}tG_l
\bigg(\langle X, e_1\rangle\nabla_1
\alpha_{l}+(k-l)\alpha_{l}\bigg)+G^{ii}h^{2}_{ii} \nonumber\\
\nonumber&&+4[(\gamma^{\prime})^2+\gamma^{\prime
\prime}]G^{11}\langle X, e_1\rangle^2+\gamma^{\prime}G^{ii}
[2-2\langle X, \nu\rangle h_{ii}].
\end{eqnarray}
Since $\langle X, e_i\rangle=0$ for $i=2, ..., n$, $$X=\langle X,
e_1\rangle e_1+\langle X, \nu\rangle \nu$$ which results in
\begin{eqnarray*}
\langle X, e_1\rangle\nabla_1\alpha_{l}(X)+(k-l)\alpha_{l}(X)=X
d_X\alpha_{l}(X)+(k-l)\alpha_{l}(X)-\langle X, \nu\rangle
d_{\nu}\alpha_{l}(X).
\end{eqnarray*}
We know from the assumption \eqref{ASS3}
\begin{eqnarray*}
[(k-l)\alpha_l+Xd_X\alpha_l(X)]=[(k-l)\alpha_l+\rho\frac{\partial
\alpha_l(X)}{\partial \rho}]\leq 0.
\end{eqnarray*}
Thus,
\begin{eqnarray}\label{C1-1}
\langle X,
e_1\rangle\nabla_1\alpha_{l}(X)+(k-l)\alpha_{l}(X)\leq-\langle X,
\nu\rangle d_{\nu}\alpha_{l}(X).
\end{eqnarray}
and
\begin{eqnarray}\label{C1-11}
&&\langle X, e_1\rangle\nabla_1\alpha_{k-1}(X, t)+\alpha_{k-1}(X,
t)\\ \nonumber&\leq&
(1-t)\varphi^{\prime}\frac{\sigma_k(e)}{\sigma_{k-1}(e)}-\langle X,
\nu\rangle d_{\nu}\alpha_{k-1}(X, t).
\end{eqnarray}
Taking \eqref{C1-1} and \eqref{C1-11} into \eqref{C1-3}, we have at
$x_0$
\begin{eqnarray*}
0&\geq& G^{ii}\phi_{ii}\\&\geq&G^{ii}h^{2}_{ii}
+4[(\gamma^{\prime})^2+\gamma^{\prime \prime}]G^{11}\langle X,
e_1\rangle^2+\gamma^{\prime}G^{ii}[2-2\langle X, \nu\rangle
h_{ii}]\\&&+t\sum_{l=0}^{k-2}G_l
d_{\nu}\alpha_{l}(X)-\frac{(1-t)}{\langle X,
\nu\rangle}\varphi^{\prime}\frac{\sigma_k(e)}{\sigma_{k-1}(e)}+
d_{\nu}\alpha_{k-1}(X, t)
\\&=&G^{ii}[h_{ii}-\gamma^{\prime}\langle X, \nu\rangle]^2
+4[(\gamma^{\prime})^2+\gamma^{\prime \prime}]G^{11}\langle X,
e_1\rangle^2+G^{ii}[2\gamma^{\prime}-(\gamma^{\prime})^2\langle X,
\nu\rangle^2]\\&&-t\sum_{l=0}^{k-2}G_l
d_{\nu}\alpha_{l}(X)-\frac{(1-t)}{\langle X,
\nu\rangle}\varphi^{\prime}\frac{\sigma_k(e)}{\sigma_{k-1}(e)}+
d_{\nu}\alpha_{k-1}(X, t).
\end{eqnarray*}
Choosing $$\gamma(t)=\frac{\alpha}{t}$$ for sufficiently large
$\alpha$. Therefore,
\begin{eqnarray*}
0\geq G^{ii}[2\gamma^{\prime}-(\gamma^{\prime})^2\langle X,
\nu\rangle^2]-C(\sum_{l=0}^{k-2}|G_l|+1).
\end{eqnarray*}
in view of
\begin{eqnarray*}
(\gamma^{\prime})^2+\gamma^{\prime \prime}\geq 0.
\end{eqnarray*}
To continue our proof, we need to estimate $G_l$ for $0\leq l\leq
k-2$. Let $N \in \mathbb{R}^1$ be a fixed positive number.

 (1) If $\frac{\sigma_k}{\sigma_{k-1}}\le N$, then we get from
$\alpha_l(X)\geq c_l$
\[|G_l|=\frac{\sigma_{l}}{\sigma_{k-1}}\leq\frac{1}{\alpha_l}
\big(\frac{\sigma_k}{\sigma_{k-1}}+\alpha_{l-1}(X, t)\big)\le
C(N+1).\]

(2) If $\frac{\sigma_k}{\sigma_{k-1}}> N$, then by Lemma
\ref{lemma1},
\begin{equation*}
|G_l|=\frac{\sigma_l}{\sigma_{k-1}}=\frac{\sigma_l}{\sigma_{l+1}}
\cdot\frac{\sigma_{l+1}}{\sigma_{l+2}}\cdot\cdot\cdot\cdot\frac{\sigma_{k-2}}{\sigma_{k-1}}\le
C\big(\frac{\sigma_{k-1}}{\sigma_k}\big)^{k-1-l}\le N^{-(k-1-l)}.
\end{equation*}
So, $|G_l|$ are bounded. By the definition of operator $G$ and
straightforward computation, we have $\sum_i
G^{ii}\ge\frac{n-k+1}{k}$ (See \cite{GZ19}), so we can choose
sufficiently large $\alpha$ such that
\begin{eqnarray*}
0\geq G^{ii}[\gamma^{\prime}-(\gamma^{\prime})^2\langle X,
\nu\rangle^2]
\end{eqnarray*}
Thus,
\begin{eqnarray*}
\gamma^{\prime}\leq(\gamma^{\prime})^2\langle X, \nu\rangle^2,
\end{eqnarray*}
which means
\begin{eqnarray*}
\langle X, \nu\rangle(X_0)\geq C.
\end{eqnarray*}
So, our proof is complete.
\end{proof}

\subsection{$C^2$ Estimates}
In order to get the estimate of the second fundamental form, we
first need some lemmas.
\begin{lemma}\label{C^2-1}
Let $M$ be a $(k-1)$ convex solution of \eqref{Eq2} with the
position vector $X$ in $\mathbb{R}^{n+1}$ and assume that
$\alpha_l(X)\geq 0$ for $0\leq l\leq k-1$ and $X \in M$. Then, we
have the following inequality
\begin{eqnarray*}
G^{ij}h_{ij; pp}\geq\nabla_p\nabla_p\alpha_{k-1}(X,
t)-\sum_{l=0}^{k-2}\frac{1}{1+\frac{1}{k+1-l}}\frac{t(\nabla_p\alpha_l)^2}{\alpha_l}G_l-\sum_{l=0}^{k-2}t\nabla_p\nabla_p\alpha_{l}G_{l}.
\end{eqnarray*}
\end{lemma}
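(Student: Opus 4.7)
The strategy is to differentiate the equation $G(h(X),X)=\alpha_{k-1}(X,t)$ twice along a tangent direction $e_p$ on $M$, and then to use the concavity of $G$ (Proposition~\ref{ellipticconcave}) together with a sharper inverse-concavity of each individual summand $-G_l$ to absorb the resulting cross terms through a completed-square inequality.

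First, viewing $G=G_k(h)+\sum_{l=0}^{k-2}t\alpha_l(X)G_l(h)$ and writing $G^{ij}=\partial G/\partial h_{ij}$, $G^{ij,rs}=\partial^2 G/\partial h_{ij}\partial h_{rs}$, I apply the chain rule twice to obtain the master identity
\begin{equation*}
G^{ij}h_{ij;pp}=\nabla_p\nabla_p\alpha_{k-1}-G^{ij,rs}h_{ij;p}h_{rs;p}-2\sum_{l=0}^{k-2}t(\nabla_p\alpha_l)(\nabla_p G_l)-\sum_{l=0}^{k-2}t(\nabla_p\nabla_p\alpha_l)G_l,
\end{equation*}
where $\nabla_p G_l=G_l^{ij}h_{ij;p}$. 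The first and last terms on the right already match the statement; the work lies in the middle two.

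Next I would estimate $-G^{ij,rs}h_{ij;p}h_{rs;p}$ from below piece by piece. For each $0\leq l\leq k-2$, write $-G_l=\sigma_l/\sigma_{k-1}=P_l^{-(k+1-l)}$ with $P_l:=(\sigma_{k-1}/\sigma_l)^{1/(k+1-l)}$. By Lemma~\ref{lemma1}, the function $(\sigma_{k-1}/\sigma_l)^{1/(k-1-l)}$ is positive and concave in $\Gamma_{k-1}$, and since a positive power less than one preserves concavity of a positive concave function, $P_l$ itself is positive and concave in $\Gamma_{k-1}$. A direct chain-rule computation on $f=P_l^{-(k+1-l)}$, dropping the nonnegative contribution coming from $-P_l^{ij,rs}h_{ij;p}h_{rs;p}\geq 0$, yields the inverse-concavity inequality
\begin{equation*}
-G_l^{ij,rs}h_{ij;p}h_{rs;p}\geq\left(1+\tfrac{1}{k+1-l}\right)\frac{(\nabla_p G_l)^2}{-G_l}.
\end{equation*}
Combining with $-G_k^{ij,rs}h_{ij;p}h_{rs;p}\geq 0$ (concavity of $G_k$) and weighting by $t\alpha_l>0$ yields $-G^{ij,rs}h_{ij;p}h_{rs;p}\geq\sum_{l=0}^{k-2}t\alpha_l\beta_l(\nabla_p G_l)^2/(-G_l)$ with $\beta_l:=1+1/(k+1-l)$.

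Finally, for each $l$ I would complete the square in $\nabla_p G_l$,
\begin{equation*}
t\alpha_l\beta_l\frac{(\nabla_p G_l)^2}{-G_l}-2t(\nabla_p\alpha_l)(\nabla_p G_l)\geq-\frac{t(\nabla_p\alpha_l)^2(-G_l)}{\alpha_l\beta_l},
\end{equation*}
and substitute into the master identity to arrive at the stated lower bound, with $1/\beta_l=1/(1+1/(k+1-l))$. The main obstacle is the inverse-concavity step: to produce the exponent $1/(k+1-l)$ appearing in the stated coefficient one must work with the lower-degree concave function $P_l=(\sigma_{k-1}/\sigma_l)^{1/(k+1-l)}$ rather than the more natural degree-one choice $(\sigma_{k-1}/\sigma_l)^{1/(k-1-l)}$, which would give a different (sharper) coefficient but not the stated one. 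Everything else is bookkeeping combining Proposition~\ref{ellipticconcave}, the Newton--Maclaurin inequality, and Young's inequality.
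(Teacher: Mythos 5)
Your strategy is essentially the paper's: differentiate \eqref{Eq2} twice along $e_p$, use concavity of $G_k$ together with an inverse-concavity bound on each $-G_l$, and complete the square. The master identity is correct, as is the concavity of $P_l=(\sigma_{k-1}/\sigma_l)^{1/(k+1-l)}$ (a sub-unit power of a positive concave function) and the resulting bound $-G_l^{ij,rs}h_{ij;p}h_{rs;p}\ge\big(1+\tfrac{1}{k+1-l}\big)(\nabla_pG_l)^2/(-G_l)$, and your completed-square step is also algebraically correct.

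The gap is in your final sentence, where you claim the substitution ``arrives at the stated lower bound.'' It does not. Substituting your two estimates into the master identity yields
\begin{equation*}
G^{ij}h_{ij;pp}\ \ge\ \nabla_p\nabla_p\alpha_{k-1}(X,t)\ +\ \sum_{l=0}^{k-2}\frac{1}{1+\frac{1}{k+1-l}}\,\frac{t(\nabla_p\alpha_l)^2}{\alpha_l}\,G_l\ -\ \sum_{l=0}^{k-2}t\,\nabla_p\nabla_p\alpha_l\,G_l,
\end{equation*}
whose middle term carries $+G_l$, a non-positive quantity since $G_l<0$; the lemma's middle term has $-G_l$, i.e.\ a non-negative quantity. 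What you prove is therefore strictly weaker than the statement, by the amount $2\sum_{l}\frac{1}{1+\frac{1}{k+1-l}}\frac{t(\nabla_p\alpha_l)^2}{\alpha_l}|G_l|$; as written, the last step of your proposal is wrong.

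This is not entirely your fault: the paper's own proof exhibits the same defect. At the ``$=$'' step of the displayed chain, the claimed completed-square decomposition would require the constant term to be $-\frac{1}{1+\frac{1}{k-1-l}}\frac{t(\nabla_p\alpha_l)^2}{\alpha_l}G_l$, not $+$; expanding the square confirms the sign is off, so the paper's proof in fact establishes the inequality with the $+$ sign displayed above. (There is a separate discrepancy between the statement's exponent $k+1-l$ and the $k-1-l$ actually used in the paper's proof via (3.10) of \cite{GZ19}; you noticed this and deliberately weakened $P_l$ to match the statement's exponent, which is a legitimate choice.) None of this matters downstream, since Proposition~\ref{C^2} only uses the resulting terms through $|G_l|$, so the sign is immaterial there, but your write-up should have flagged the sign mismatch rather than asserting agreement with the statement.
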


\begin{proof}
 Differentiating the equation \eqref{Eq2} once, we can have
\begin{eqnarray*}
\nabla_p\alpha_{k-1}(X, t)=G^{ij}h_{ij; p}
+\sum_{l=0}^{k-2}t\nabla_p \alpha_{l}G_{l}.
\end{eqnarray*}
Differentiating the equation \eqref{Eq2} twice, we can have
\begin{eqnarray*}
\nabla_p\nabla_p\alpha_{k-1}(X, t)=\sum_{l=1}^{n}G^{ij,rs}h_{ij;
p}h_{rs; p} +G^{ij}h_{ij;
pp}+2\sum_{l=0}^{k-2}t\nabla_p\alpha_{l}G_{l}^{ij}h_{ij;
p}+\sum_{l=0}^{k-2}t\nabla_p\nabla_p\alpha_{l}G_{l}.
\end{eqnarray*}
Moreover, since the operator $(\frac{\sigma_{k-1}}{\sigma_l})^{\frac{1}{k-1-l}}$
is concave for $0\leq l\leq k-2$, we have from (see also (3.10) in \cite{GZ19})
\begin{eqnarray}
-G_l^{ij,rs}h_{ij; p}h_{rs; p} \ge
-\big(1+\frac{1}{k-1-l}\big)G_l^{-1}G_l^{ij}G_l^{rs}h_{ij; p}h_{rs;
p}.\label{93}
\end{eqnarray}
Thus, we have in view that $G_k$ is concave in $\Gamma_{k-1}$
\begin{eqnarray*}
&&\nabla_p\nabla_p\alpha_{k-1}(X,
t)\\&\leq&\sum_{l=1}^{k-2}t\alpha_lG_{l}^{ij,rs}h_{ij; p}h_{rs;
p}+G^{ij}h_{ij;
pp}+2\sum_{l=0}^{k-2}t\nabla_p\alpha_{l}G_{l}^{ij}h_{ij;
p}+\sum_{l=0}^{k-2}t\nabla_p\nabla_p\alpha_{l}G_{l}\\&\leq&
\sum_{l=1}^{k-2}t\alpha_l\big(1+\frac{1}{k-1-l}\big)G_l^{-1}(G_l^{ij}h_{ij;
p})^2+G^{ij}h_{ij; pp}
+2\sum_{l=0}^{k-2}t\nabla_p\alpha_{l}G_{l}^{ij}h_{ij;
p}\\&&+\sum_{l=0}^{k-2}t\nabla_p\nabla_p\alpha_{l}G_{l}
\\&=&\frac{k-l}{k-1-l}
\sum_{l=1}^{k-2}t\alpha_lG_l^{-1}\bigg(G_l^{ij}h_{ij;
p}+\frac{1}{1+\frac{1}{k-1-l}}\frac{\nabla_p\alpha_l}{\alpha_l}G_l\bigg)^2+
\sum_{l=0}^{k-2}\frac{1}{1+\frac{1}{k-1-l}}\frac{t(\nabla_p\alpha_l)^2}{\alpha_l}G_l\\&&+G^{ij}h_{ij;
pp}+\sum_{l=0}^{k-2}t\nabla_p\nabla_p\alpha_{l}G_{l}\\&\leq&
\sum_{l=0}^{k-2}\frac{1}{1+\frac{1}{k-1-l}}\frac{t(\nabla_p\alpha_l)^2}{\alpha_l}G_l+G^{ij}h_{ij;
pp}+\sum_{l=0}^{k-2}t\nabla_p\nabla_p\alpha_{l}G_{l}.
\end{eqnarray*}
So, our proof is complete.
\end{proof}

\begin{lemma}\label{C^2-2}
Let $M$ be a $(k-1)$ convex solution of \eqref{Eq2} with the
position vector $X$ in $\mathbb{R}^{n+1}$. We have the following
equality
\begin{eqnarray*}
&&G^{ij}\langle X, \nu\rangle_{ij}+\langle X, \nu\rangle
G^{ij}h_{im}h^{m}_{j}\\&=&\bigg(\nabla_p\alpha_{k-1}(X,
t)-\sum_{l=0}^{k-2}t\nabla_p\alpha_l G_l\bigg)\langle X,
X^p\rangle+\alpha_{k-1}(X, t)-\sum_{l=0}^{k-2}(k-l)t\alpha_l G_l.
\end{eqnarray*}
\end{lemma}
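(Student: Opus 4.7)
The plan is to reduce the identity to a pointwise computation of the tangential Hessian of $\langle X,\nu\rangle$ contracted with $G^{ij}$, and then to replace the two resulting contractions using the equation itself and its first covariant derivative. All steps are local and pointwise; the work is essentially bookkeeping.

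First I would establish the identity
$$\langle X,\nu\rangle_{;ij} = h_{ij} + h_{ij;l}\langle X, e_l\rangle - h_{im}h^m_j\langle X,\nu\rangle.$$
One differentiation of $\langle X,\nu\rangle$, using the Weingarten formula $\nabla_i\nu = h_{il}X^l$ together with $\nabla_i X = e_i$, gives $\langle X,\nu\rangle_{;i} = h_{il}\langle X, e_l\rangle$. A second differentiation, using $\nabla^{\mathbb{R}^{n+1}}_{j} e_l = -h_{jl}\nu$ (at a point where the intrinsic connection forms vanish) and the Codazzi identity $h_{il;j} = h_{ij;l}$, produces the displayed formula; the diagonal version is already invoked implicitly in the proof of Proposition \ref{C^1}. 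Contracting with $G^{ij}$ and moving the last term to the left yields
$$G^{ij}\langle X,\nu\rangle_{ij} + \langle X,\nu\rangle\, G^{ij}h_{im}h^m_j = G^{ij}h_{ij} + \langle X, e_p\rangle\, G^{ij}h_{ij;p}.$$

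Next I would evaluate the two right-hand side contractions. For $G^{ij}h_{ij}$ I would use Euler's identity: $G_k = \sigma_k/\sigma_{k-1}$ is homogeneous of degree $1$ in $h_{ij}$ and each $G_l = -\sigma_l/\sigma_{k-1}$ is homogeneous of degree $-(k-1-l)$, so $(G_k)^{ij}h_{ij} = G_k$ and $(G_l)^{ij}h_{ij} = -(k-1-l)G_l$. Substituting $G_k = \alpha_{k-1}(X,t) - \sum_{l=0}^{k-2}t\alpha_l G_l$ from the equation \eqref{Eq2} and simplifying produces
$$G^{ij}h_{ij} = \alpha_{k-1}(X,t) - \sum_{l=0}^{k-2}(k-l)\,t\alpha_l G_l,$$
the factor $(k-l)$ arising as $1 + (k-1-l)$. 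For $G^{ij}h_{ij;p}$ I would simply differentiate the equation \eqref{Eq2} along $e_p$; since the explicit $X$-dependence of $G$ enters only through the coefficients $\alpha_l(X)$, the chain rule gives
$$G^{ij}h_{ij;p} = \nabla_p\alpha_{k-1}(X,t) - \sum_{l=0}^{k-2}t\nabla_p\alpha_l\, G_l.$$

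Substituting these two expressions into the contracted Hessian identity, and using $\langle X, e_p\rangle = \langle X, X^p\rangle$ (which follows from the orthonormality of the frame, since $X^p = g^{pk}\nabla_k X$), gives exactly the stated formula. There is no genuine analytic obstacle; the one place where an error is easy to commit is the Euler-identity step, where the coefficient $(k-l)$ rather than $(k-1-l)$ must be tracked carefully, owing to the extra $\sum t\alpha_l G_l$ term hidden inside the combination $G = G_k + \sum_{l=0}^{k-2} t\alpha_l G_l$.
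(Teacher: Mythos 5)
Your proof is correct and follows essentially the same route as the paper: compute the Hessian $\langle X,\nu\rangle_{;ij}=h_{ij}+h_{ij;l}\langle X,X^l\rangle-h_{im}h^m_j\langle X,\nu\rangle$ via Gauss, Weingarten and Codazzi, contract with $G^{ij}$, and then replace $G^{ij}h_{ij}$ (Euler homogeneity applied to $G_k$ of degree $1$ and $G_l$ of degree $-(k-1-l)$) and $G^{ij}h_{ij;p}$ (differentiating the equation) by the expressions coming from \eqref{Eq2}. The paper carries out exactly this computation, merely leaving the Euler-identity step implicit in the line $G^{ij}h_{ij}=G-\sum_{l=0}^{k-2}(k-l)t\alpha_l G_l$, and your bookkeeping of the coefficient $(k-l)=1+(k-1-l)$ matches it.
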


\begin{proof}
We have by Gauss formula \eqref{Gauss for} and Codazzi equation
\eqref{Codazzi}
\begin{eqnarray*}
\langle X, \nu\rangle_{ij}&=&\langle X_{ij}, \nu\rangle+2\langle
X_i, \nu_j\rangle+\langle X, \nu_{ij}\rangle
\\&=&-h_{ij}+2h_{ij}+h_{ij; p}\langle X^{p}, X\rangle-h_{im}h^{m}_{j}\langle X, \nu\rangle
\\&=&h_{ij}+h_{ij; p}\langle X^{p}, X\rangle-h_{im}h^{m}_{j}\langle X, \nu\rangle,
\end{eqnarray*}
which results in
\begin{eqnarray*}
G^{ij}\langle X, \nu\rangle_{ij}=G^{ij}h_{ij; p}\langle X,
X^p\rangle+G^{ij}h_{ij}-\langle X,\nu\rangle G^{ij}h_{im}h^{m}_{j}.
\end{eqnarray*}
Note that
$$G^{ij}h_{ij}=G-\sum_{l=0}^{k-2}(k-l)t\alpha_l G_l=\alpha_{k-1}(X, t)-\sum_{l=0}^{k-2}(k-l)t\alpha_l G_l$$
and
$$G^{ij}h_{ij; p}=\nabla_p\alpha_{k-1}(X, t)-\sum_{l=0}^{k-2}t\nabla_p\alpha_l G_l.$$
Thus,
\begin{eqnarray*}
G^{ij}\langle X, \nu\rangle_{ij}&=&\bigg(\nabla_p\alpha_{k-1}(X,
t)-\sum_{l=0}^{k-2}t\nabla_p\alpha_l G_l\bigg)\langle X,
X^p\rangle+\alpha_{k-1}(X, t)\\&&-\sum_{l=0}^{k-2}(k-l)t\alpha_l
G_l-\langle X, \nu\rangle G^{ij}h_{im}h^{m}_{j}.
\end{eqnarray*}
\end{proof}
So, we complete our proof.

Now we begin to estimate the second fundamental form.

\begin{proposition}\label{C^2}
If the $k$-convex hypersurface $M$ satisfies \eqref{Eq2} with the
position vector $X$ in $\mathbb{R}^{n+1}$. Assume that $k\geq 2$ and
$$\alpha_l(X)\geq c_l>0, \quad \forall X \in M,$$ for $0\leq l\leq
k-1$. Then there exists a constant $C$, depending on $k, n$, $c_l$,
$|X|_{C^2}$, $|\alpha_l|_{C^2}$ and $|\nabla \rho|_{C^0}$ such that
for $1\leq i\leq n$
\begin{equation*}
|\kappa_{i}(X)|\le C, \quad \mbox{for} \quad X \in M
\end{equation*}
\end{proposition}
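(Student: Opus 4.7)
Since $\kappa\in\Gamma_k\subset\Gamma_1$ forces $\sigma_1(\kappa)>0$, an upper bound on $\kappa_{\max}$ automatically produces a two-sided bound on every $\kappa_i$, so it suffices to bound $\kappa_{\max}$ from above. The strategy is to apply the maximum principle to an auxiliary test function combining $\log\kappa_{\max}$ with a large negative multiple of $\log\langle X,\nu\rangle$:
\begin{equation*}
W(X):=\log h_{nn}(X)-N\log\langle X,\nu(X)\rangle,
\end{equation*}
where $N>0$ is a large constant to be chosen and $h_{nn}(X)$ denotes the largest eigenvalue of $(h_{ij}(X))$. At a maximum point one passes to a fixed unit vector $\xi$ with $h_{ij}\xi^i\xi^j=\kappa_{\max}$ there, regularizing the possible non-smoothness of the top eigenvalue (a standard device). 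By Propositions \ref{C^0} and \ref{C^1}, $\langle X,\nu\rangle$ is bounded below by a positive constant on the compact hypersurface $M$, so $W$ is well-defined.

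Let $X_0$ be a maximum of $W$ and pick a local orthonormal frame at $X_0$ diagonalizing $(h_{ij})$ with $h_{nn}(X_0)=\kappa_{\max}(X_0)$. Criticality $\nabla_i W=0$ gives $h_{nn;i}/h_{nn}=N\langle X,\nu\rangle_i/\langle X,\nu\rangle$. For the second-order inequality $G^{ii}\nabla_i\nabla_i W\le 0$, expanding $\nabla_i\nabla_i h_{nn}$ at the diagonal point via the commutator identity \eqref{2rd} yields
\begin{equation*}
\nabla_i\nabla_i h_{nn}=\nabla_n\nabla_n h_{ii}+h_{ii}h_{nn}(h_{nn}-h_{ii}),
\end{equation*}
and Lemma \ref{C^2-1} (with $p=n$) bounds $G^{ii}\nabla_n\nabla_n h_{ii}$ from below using $\alpha_l\ge c_l>0$ and the uniform bound on $G_l$ already obtained inside the proof of Proposition \ref{C^1}. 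Meanwhile, Lemma \ref{C^2-2} rearranges to $G^{ii}\nabla_i\nabla_i\langle X,\nu\rangle=-\langle X,\nu\rangle\sum_i G^{ii}h_{ii}^2+(\text{controlled terms})$, so the $-N\log\langle X,\nu\rangle$ component of $W$ contributes the decisive positive quadratic term $N\sum_i G^{ii}h_{ii}^2$. Substituting the criticality relation into the bad gradient contribution $-G^{ii}(h_{nn;i})^2/h_{nn}^2$ converts it into $-N^2 G^{ii}\langle X,\nu\rangle_i^2/\langle X,\nu\rangle^2$, which is to be absorbed into $N\sum_i G^{ii}h_{ii}^2$ (possibly after adjoining a small $\tfrac{B}{2}|X|^2$ term to $W$ and applying Cauchy--Schwarz).

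Collecting all contributions yields an inequality of the schematic shape
\begin{equation*}
0\ge c_1 \sum_i G^{ii}h_{ii}^2 + c_2\,h_{nn}\sum_i G^{ii}h_{ii} - C,
\end{equation*}
which, combined with the Newton--Maclaurin inequalities in Lemma \ref{lemma1} and the positive lower bound $\sum_i G^{ii}\ge (n-k+1)/k$ recalled in the proof of Proposition \ref{C^1}, forces $h_{nn}(X_0)\le C$. \textbf{The main obstacle} is that under mere $k$-convexity (not full convexity) some $h_{ii}$ may be negative, so the absorption of the negative gradient term into $N\sum_i G^{ii}h_{ii}^2$ is not automatic: one must split into the cases $|h_{11}|/h_{nn}$ small versus bounded below and handle each separately, following the technique of Guan, Ren and Wang \cite{Guan-Ren15} and its $k$-convex extension by Li, Ren and Wang \cite{Li-Ren}, exploiting the concavity of $\sigma_k/\sigma_{k-1}$ on $\Gamma_{k-1}$. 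The additional lower-order mixed terms $\sum_l t\alpha_l G_l$ specific to \eqref{Eq2} do not spoil the argument since the $G_l$'s are bounded by the same dichotomy ($\sigma_k/\sigma_{k-1}\le N$ vs.\ $>N$) used in the proof of Proposition \ref{C^1}.
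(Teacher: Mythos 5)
Your proposal follows the general philosophy of Guan--Ren--Wang (test the largest curvature against $\langle X,\nu\rangle$ and enter a case analysis), but it misses the key structural observation that makes the paper's proof short, and it leaves a genuine gap by deferring the hardest step to references that treat a different equation.

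The paper does not bound $\kappa_{\max}$ directly. Since $k\ge 2$, the hypersurface is $2$-convex, so $2\sigma_2=H^2-|A|^2>0$ and hence $|\kappa_i|\le |A|\le H$ for every $i$. It therefore suffices to bound the \emph{mean curvature} $H$, which is a smooth scalar function on $M$; this avoids the non-smoothness of the top eigenvalue and the perturbation device you invoke. The test function is simply
\begin{equation*}
W=\log H-\log\langle X,\nu\rangle,
\end{equation*}
with no large constant $N$. Summing the commutator identity $h_{pp;ii}=h_{ii;pp}-h_i^m h_{im}h_{pp}+|A|^2h_{ii}$ over $p$ produces the term $|A|^2\,G^{ii}h_{ii}$, and since $G^{ii}h_{ii}=\alpha_{k-1}(X,t)-\sum_{l\le k-2}(k-l)t\alpha_l G_l$ is a controlled quantity while $|A|^2\ge H^2/n$, this already supplies the decisive positive term $\ge cH^2/H=cH$ after dividing by $H$. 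The term $-G^{ii}h_{mi}h_i^m$ is exactly cancelled by $\langle X,\nu\rangle G^{ij}h_{im}h^m_j$ coming from Lemma \ref{C^2-2} — there is no need to ``absorb'' anything and hence no case analysis on the sign pattern of $h_{ii}$ whatsoever. The only dichotomy left is the same $\sigma_k/\sigma_{k-1}\lessgtr H^{1/k}$ split you mention at the end, used to bound $|G_l|$, which then forces $H\le C$.

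The step you flag as the ``main obstacle'' is a real gap in your argument: you invoke the Guan--Ren--Wang / Li--Ren--Wang machinery for $\sigma_k(\kappa)=f(X,\nu)$, but the equation here is the ratio-type operator $\sigma_k/\sigma_{k-1}-\sum_l t\alpha_l\sigma_l/\sigma_{k-1}=\alpha_{k-1}$, which has different algebraic structure and for which their absorption argument does not transfer verbatim. As written, your proof would require redoing that entire case analysis for the new operator, whereas the paper's choice of $\log H$ instead of $\log\kappa_{\max}$ bypasses it completely. You should replace the $\kappa_{\max}$ test function with $\log H$ and use the $2$-convexity observation $|\kappa_i|\le H$; that turns your outline into the paper's proof.
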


\begin{proof}
Since $k\geq 2$, $M$ is 2-convex. Thus,
\begin{equation*}
|\kappa_i|\leq C H.
\end{equation*}
So, we only need to estimate the mean curvature $H$ of $M$. Taking
the auxillary function
$$W(X)=\log H-\log\langle X, \nu\rangle.$$ Assume that $X_0$ is the
maximum point of $W$. Then at $X_0,$
\begin{equation}\label{C2-1}
0=W_{i}=\frac{H_i}{H}-\frac{\langle X, \nu\rangle_i}{\langle X, \nu\rangle}
\end{equation}
and
\begin{equation}\label{C2-2}
0\geq W_{ij}(x_0)=\frac{H_{ij}}{H}-\frac{\langle X, \nu\rangle_{ij}}{\langle X, \nu\rangle}.
\end{equation}
Choosing a suitable coordinate $\{x^1, x^2, ..., x^n\}$ on the
neighborhood of $X_0 \in M$ such that the matrix $\{h_{ij}\}$ is
diagonal at $X_0$. This implies at $x_0$
\begin{equation*}
0\geq G^{ij}W_{ij}(x_0)=\sum_{p=1}^{n}\frac{1}{H}G^{ii}h_{pp;
ii}-\frac{G^{ii}\langle X, \nu\rangle_{ii}}{\langle X, \nu\rangle}.
\end{equation*}
From \eqref{2rd}, we obtain
\begin{eqnarray*}
h_{pp; ii}=h_{ii; pp}-h^{m}_{i}h_{im}h_{pp}+|A|^2h_{ii},
\end{eqnarray*}
which results in at $x_0$ from  Lemma \ref{C^2-1}
\begin{eqnarray*}
0&\geq&\frac{1}{H}\sum_{p=1}^{n}G^{ii}h_{ii;
pp}-G^{ii}h_{mi}h^{m}_{i}+\frac{|A|^2}{H}\bigg(\alpha_{k-1}(X,
t)-\sum_{l=0}^{k-2}(k-l)t\alpha_l G_l\bigg)-\frac{G^{ii}\langle X,
\nu\rangle_{ii}}{\langle X, \nu\rangle}\\&\geq&
\frac{1}{H}\sum_{p=1}^{n}\bigg(\nabla_p\nabla_p\alpha_{k-1}(X,
t)-\sum_{l=0}^{k-2}\frac{1}{1+\frac{1}{k-1-l}}\frac{t(\nabla_p\alpha_l)^2}{\alpha_l}G_l
-\sum_{l=0}^{k-2}t\nabla_p\nabla_p\alpha_{l}G_{l}\bigg)-G^{ii}h_{mi}h^{m}_{i}\\&&+\frac{|A|^2}{H}\bigg(\alpha_{k-1}(X,
t)-\sum_{l=0}^{k-2}(k-l)t\alpha_l G_l\bigg)-\frac{G^{ii}\langle X,
\nu\rangle_{ii}}{\langle X, \nu\rangle}\\&\geq&
\frac{1}{H}\sum_{p=1}^{n}\bigg(\nabla_p\nabla_p\alpha_{k-1}(X,
t)-\sum_{l=0}^{k-2}\frac{1}{1+\frac{1}{k-1-l}}\frac{t(\nabla_p\alpha_l)^2}{\alpha_l}G_l
-\sum_{l=0}^{k-2}t\nabla_p\nabla_p\alpha_{l}G_{l}\bigg)\\&&+\frac{|A|^2}{H}\bigg(\alpha_{k-1}(X,
t)-\sum_{l=0}^{k-2}(k-l)t\alpha_l G_l\bigg)\\&&-\frac{1}{\langle X,
\nu\rangle}\bigg(\nabla_p\alpha_{k-1}(X,
t)-\sum_{l=0}^{k-2}t\nabla_p\alpha_l G_l\bigg)\langle X,
X^p\rangle-\frac{1}{\langle X, \nu\rangle}\alpha_{k-1}(X,
t)\\&&+\frac{1}{\langle X, \nu\rangle}\sum_{l=0}^{k-2}(k-l)t\alpha_l
G_l,
\end{eqnarray*}
where we use Lemma \ref{C^2-2} to get the last inequality.
Note that
\begin{eqnarray*}
\nabla_p\alpha_l=\frac{\partial \alpha_l}{\partial X^i}X^{i}_{p},
\quad
\sum_{p=1}^{n}\nabla_p\nabla_p\alpha_{l}=\sum_{p=1}^{n}\frac{\partial
\alpha_l}{\partial X^i\partial
X^j}X^{i}_{p}X^{j}_{p}-H\frac{\partial \alpha_l}{\partial
X^i}\nu^{i}
\end{eqnarray*}
in view of Weingarten formula \eqref{Wein for}. Then, we have
together with the fact $|A|^2\geq \frac{1}{n}H^2$
\begin{eqnarray*}
C\frac{1}{H}\bigg(\sum_{l=0}^{k-2}|G_l|+1\bigg)(H+1)
+C\bigg(\sum_{l=0}^{k-2}|G_l|+1\bigg)\geq\frac{H}{n}\alpha_{k-1}(X,
t)\geq C H.
\end{eqnarray*}
Let us divide the proof into two cases.

(1) $\frac{\sigma_k}{\sigma_{k-1}}\le H^{\frac{1}{k}}$. Then, we get
from $\alpha_l(X)\geq c_l$
\[|G_l|=\frac{\sigma_l}{\sigma_{k-1}}\leq\frac{1}{\alpha_l}\big(\frac{\sigma_k}{\sigma_{k-1}}+\alpha_{k-1}(X, t)\big)\le CH^{\frac{1}{k}},\]
which implies $H \le C$.

(2) $\frac{\sigma_k}{\sigma_{k-1}}> H^{\frac{1}{k}}$.
Then by Lemma \ref{lemma1},
\begin{equation*}
|G_l|=\frac{\sigma_l}{\sigma_{k-1}}=\frac{\sigma_l}{\sigma_{l+1}}
\cdot\frac{\sigma_{l+1}}{\sigma_{l+2}}\cdot\cdot\cdot\cdot\frac{\sigma_{k-2}}{\sigma_{k-1}}\le
C\big(\frac{\sigma_{k-1}}{\sigma_k}\big)^{k-1-l}\le
H^{-\frac{k-1-l}{k}}.
\end{equation*}
Now we also derive $H\le C$. So, our proof is complete.
\end{proof}

\section{The proof of the Theorem}

In this section, we use the degree theory for nonlinear elliptic
equation developed in \cite{Li89} to prove Theorem \ref{Main}. The
proof here is similar to \cite{An, Jin, Li-Sh}. So, only sketch will
be given below.

After establishing the  a priori estimates Proposition \ref{C^0},
Proposition \ref{C^1} and Proposition \ref{C^2}, we know that the
equation \eqref{Eq} is uniformly elliptic. From \cite{Eva82},
\cite{Kry83}, and Schauder estimates, we have
\begin{eqnarray}\label{C2+}
|\rho|_{C^{4,\alpha}(\mathbb{S}^n)}\leq C
\end{eqnarray}
for any $k$-convex solution $M$ to the equation \eqref{Eq}, where
the position vector of $M$ is $X=\rho(x)x$ for $x \in \mathbb{S}^n$.
We define
\begin{eqnarray*}
C_{0}^{4,\alpha}(\mathbb{S}^n)=\{\rho \in
C^{4,\alpha}(\mathbb{S}^n): M \ \mbox{is}
 \ k-\mbox{convex}\}.
\end{eqnarray*}
Let us consider $$F(.; t): C_{0}^{4,\alpha}(\mathbb{S}^n)\rightarrow
C^{2,\alpha}(\mathbb{S}^n)$$ which is defined by
\begin{eqnarray*}
F(\rho, x; t)=\frac{\sigma_k(\kappa(X))}{\sigma_{k-1}
(\kappa(X))}-\sum_{l=0}^{k-2}t\alpha_{l}(X)\frac{\sigma_{l}
(\kappa(X))}{\sigma_{k-1}(\kappa(X))}-\alpha_{k-1}(X, t).
\end{eqnarray*}
Let $$\mathcal{O}_R=\{\rho \in C_{0}^{4,\alpha}(\mathbb{S}^n):
|\rho|_{C^{4,\alpha}(\mathbb{S}^n)}<R\}$$ which clearly is an open
set of $C_{0}^{4,\alpha}(\mathbb{S}^n)$. Moreover, if $R$ is
sufficiently large, $F(\rho, x; t)=0$ has no solution on $\partial
\mathcal{O}_R$ by the a prior estimate established in \eqref{C2+}.
Therefore the degree $\deg(F(.; t), \mathcal{O}_R, 0)$ is
well-defined for $0\leq t\leq 1$. Using the homotopic invariance of
the degree, we have
\begin{eqnarray*}
\deg(F(.; 1), \mathcal{O}_R, 0)=\deg(F(.; 0), \mathcal{O}_R, 0).
\end{eqnarray*}
Proposition \ref{Uni} shows that $\rho=\rho_0$ is the unique
solution to the above equation for $t=0$. Direct calculation show
that
\begin{eqnarray*}
F(s\rho_0, x; 0)=[1-\varphi(s\rho_0)]\frac{\sigma_k(e)}{\sigma_{k-1}
(e)}\frac{1}{s\rho_0}.
\end{eqnarray*}
Using the fact $\varphi(\rho_0)=1$, we have
\begin{eqnarray*}
\delta_{\rho_0}F(\rho_0, x; 0)=\frac{d}{d s}|_{s=1}F(s\rho_0, x;
0)=-\varphi^{\prime}(\rho_0)\frac{\sigma_k(e)}{\sigma_{k-1} (e)}>0,
\end{eqnarray*}
where $\delta F(\rho_0, x; 0)$ is the linearized operator of $F$ at
$\rho_0$. Clearly, $\delta F(\rho_0, x; 0)$ takes the form
\begin{eqnarray*}
\delta_{w}F(\rho_0, x; 0)=-a^{ij}w_{ij}+b^i
w_i-\varphi^{\prime}(\rho_0)\frac{\sigma_k(e)}{\sigma_{k-1} (e)} w,
\end{eqnarray*}
where $a^{ij}$ is a positive definite matrix. Since
$-\varphi^{\prime}(\rho_0)\frac{\sigma_k(e)}{\sigma_{k-1} (e)}>0,$
thus $\delta F(\rho_0, x; 0)$ is an invertible operator. Therefore,
\begin{eqnarray*}
\deg(F(.; 1), \mathcal{O}_R; 0)=\deg(F(.; 0), \mathcal{O}_R, 0)=\pm
1.
\end{eqnarray*}
So, we obtain a solution at $t=1$. This completes the proof of
Theorem \ref{Main}.

\bigskip

\bigskip


\begin{thebibliography}{99}

\bibitem{Al} A. D. Aleksandrov, Uniqueness theorems for surfaces in the large (Russian),
Vestnik Leningrad. Univ., vol. 11
(1956), no. 19, 5-17; vol. 12 (1957), no. 7, 15-44; vol. 13 (1958),
no. 7, 14-26; vol. 13 (1958),no. 13, 27-34; vol. 13 (1958), no.
19,5-8; vol. 14(1959), no. 1, 5-13; vol. 15 (1960), no. 7, 5-13.

\bibitem{An} F.J. Andrade, J.L. Barbosa, J.H. de Lira, Closed Weingarten hypersurfaces
in warped product manifolds, Indiana
Univ. Math. J. 58(4), 1691-1718(2009).

\bibitem{Ba1} I. Ja. Bakelman and B. E. Kantor, Estimates of the solutions of
quasilinear elliptic equations that are connected with problems of
geometry in the large, (Russian) Mat. Sb. (N.S.) 91(133) (1973),
336-349, 471. [5]

\bibitem{Ba2} I. Ja. Bakelman and B. E. Kantor, Existence of a hypersurface
homeomorphic to the sphere in Euclidean space with a given mean
curvature, Geometry and topology, 1 (Russian), 3-10, Gos. Ped. Inst.
im. Gercena, Leningrad, 1974.

\bibitem{Ba-Li}  J. L. Barbosa, J. H. de Lira and V. I. Oliker, A priori estimates
for starshaped compacthypersurfaces with prescribed
mth curvature function in space forms, Nonlinear problems in
mathematical physics and related topics, I, 35-52, Int. Math. Ser.
(N. Y.), 1, Kluwer/Plenum, New York, 2002.

\bibitem{Ca} L. Caffarelli, L. Nirenberg and J. Spruck, Nonlinear second order
elliptic equations, IV. Starshaped compact Wein-
gartenhypersurfaces,CurrentTopics in PDE¡¯s, edited byY. Ohya,K.
Kosahara,N. Shimakura,KinokuniaCompany LTD, Tokyo 1986, 1-26.

\bibitem{Chen} Daguang Chen, Haizhong Li, Zhizhang Wang. Starshaped compact hypersurfaces with prescirbed Weingarten
curvature in warped product manifolds. arXiv:1705.00313

\bibitem{Chen19} Li Chen, Xi Guo and Yan He:
A class of fully nonlinear equations arising in conformal geometry, preprint.

\bibitem{Co17} T. Collins and G. Sz$\acute{e}$kelyhidi, Convergence of the
$J$-flow on toric manifolds, J. Differential Geom. 107
(2017) no. 1, 47-81

\bibitem{Eva82} L. C. Evans:
Classical solutions of fully nonlinear, convex, second-order elliptic equations,
Comm. Pure Appl. Math. 35, no. 3(1982), 333--363.

\bibitem{Fi} W.J. Firey, Christoffel problem for general convex bodies, Mathematik 15, 7-21 (1968)

\bibitem{Fu08} J.X. Fu and S.-T. Yau, The theory of superstring with flux on non-K¡§ahler manifolds and the complex
Monge-Amp`ere equation, J. Differential Geom., Vol 78, Number 3 (2008), 369-428.

\bibitem{Fu07} J.X. Fu and S.T. Yau, A Monge-Amp`ere type equation motivated by string theory, Comm. Anal. Geom.
15 (2007), no. 1, 29-76.

\bibitem{Guan-Ren15} P. Guan, C. Ren and Z. Wang, Global $C^2$ estimates for convex
solutions of curvature equations, Comm. Pure Appl. Math. 68 (2015),
no. 8, 1287-1325.

\bibitem{Guan02} Guan, B.; Guan, P. Convex hypersurfaces of prescribed curvatures. Ann. of Math. (2) 156
(2002), no. 2, 655¨C673. doi:10.2307/3597202

\bibitem{Guan12} P. Guan, J. Li and Y.Y. Li, Hypersurfaces of Prescribed Curvature Measure,
Duke Math. J., 161, (2012),1927-1942.

\bibitem{Guan09} P. Guan, C.S. Lin and X. Ma, The Existence of Convex Body with
Prescribed Curvature Measures, International Mathematics Research
Notices, (2009), 1947-1975.

\bibitem{GuanB14} B. Guan, Second order estimates and regularity for fully nonlinear elliptic equations on Riemannian manifolds,
Duke Math. J. 163 (2014), 1491-1524.

\bibitem{Guan-Sp}  B. Guan, J. Spruck and L. Xiao, Interior curvature estimates and the asymptotic plateau problem in hyperbolic
space, J. Differential Geom. 96 (2014), no. 2, 201-222.

\bibitem{GT}D. Gilbarg and N.S. Trudinger:
Elliptic partial differential equations of second order. Reprint of the 1998 edition. Classics in Mathematics. Springer-Verlag, Berlin, 2001. xiv+517 pp.

\bibitem{GZ19} P.F. Guan and  X.W. Zhang:
A class of curvature type equations.
Preprint.arXiv:1909.03645.

\bibitem{HL} R. Harvey, H.B. Lawson, Calibrated geometries. Acta. Math.,
148 (1982), 47-157.

\bibitem{Iv1} N. Ivochkina, Solution of the Dirichlet problem for curvature
equations of order $m$, Mathematics of the USSR- Sbornik, 67,
(1990), 317-339. [32]

\bibitem{Iv2} N. Ivochkina. The Dirichlet problem for the equations of curvature
of order $m$, Leningrad Math. J. 2-3, (1991), 192-217.


\bibitem{Jin} Q. Jin and Y. Y. Li, Starshaped compact hypersurfaces with prescribed $k$-th mean curvature in hyperbolic space,
Discrete Contin. Dyn. Syst. 15 (2006), 2, 367-377.

\bibitem{Li-Ren} M. Li, C. Ren and Z. Wang, An interior estimate for convex
solutions and a rigidity theorem, J. Funct. Anal. 270 (2016),
2691-2714.

\bibitem{Li-Ol} Y. Y. Li, and V. I. Oliker, Starshaped compact hypersurfaces with prescribed
$m$-th mean curvature in elliptic space,
J. Partial Differential Equations 15 (2002), 3, 68-80.

\bibitem{Kry83}N. V. Krylov:
Boundedly inhomogeneous elliptic and parabolic equations in a domain,
Izv. Akad. Nauk SSSR Ser. Mat. 47,no 1 (1983), 75--108.

\bibitem{Kry95} N. V. Krylov:
On the general notion of fully nonlinear second order elliptic equation.Trans. Amer. Math. Soc. 347 (3),(1995), 857--895¡£

\bibitem{Li89}Y. Y. Li:
Degree theory for second order nonlinear elliptic operators and its applications.
Comm. Partial Differential Equations. 14(1989), 1541--1578.

\bibitem{Li-Sh} Q. R. Li and W. M. Sheng, Closed hypersurfaces with prescribed Weingarten
curvature in Riemannian manifolds, Calc. Var. Partial Differential
Equations 48 (2013), no. 1-2, 41-66.

\bibitem{LT94} M. Lin and N.S. Trudinger:
On some inequalities for elementary symmetric functions.
Bull. Aust. Math. Soc. 50(1994), 317--326.

\bibitem{Ol} V. I. Oliker, Hypersurfaces in $\mathbb{R}^{n+1}$ with prescribed Gaussian
curvature and related equations of Monge-Ampere type, Comm. Partial
Differential Equations, 9 (1984), 8, 807-838.

\bibitem{Ph19} D.H. Phong, S. Picard, and X.W. Zhang, On estimates for the Fu-Yau generalization of a Strominger
system, J. Reine Angew. Math. (Crelle¡¯s Journal), Vol. 2019, Issue 751 (2019), 243-274

\bibitem{Ph17} D.H. Phong, S. Picard, X.W. Zhang, The Fu-Yau equation with negative slope parameter, Invent.
Math., Vol. 209, No. 2 (2017), 541-576.

\bibitem{Ph20} D.H. Phong, S. Picard, and X.W. Zhang, Fu-Yau Hessian Equations, arXiv:1801.09842, to appear in
J. Differential Geometry.

\bibitem{Ren} C. Ren and Z. Wang, On the curvature estimates for Hessian equations, 2016, arXiv:1602.06535.

\bibitem{Sp} J. Spruck and L. Xiao, A note on starshaped compact hypersurfaces with prescribed scalar curvature in space form,
arXiv: 1505.01578.

\bibitem{Sch13}R. Schneider, Convex bodies: the Brunn-Minkowski theory, Second
edition, No. 151. Cambridge University Press, 2013.

\bibitem{Tr90} N.S. Trudinger:
The Dirichlet problem for the prescribed curvature equations.
Arch. Rational Mech. Anal. 111(2) (1990), 153--179.

\bibitem{Tr} A. Treibergs and W. Wei. Embedded hypersurfaces with prescribed mean curvature, J. Differential Geometry 18, 3
(1983), 513-521

\end{thebibliography}
\end{document}